\journal{Journal of Computational Physics}
\theoremstyle{plain}
\renewcommand\thefigure{\thesection.\@arabic\c@figure}
\newtheorem{thm}{\bf Theorem}
\newtheorem{proposition}{Proposition}[section]
\newenvironment{theorem}{\begin{thm}} {\end{thm}}
\newtheorem{cor}{\bf Corollary}
\newtheorem{lmm}{\bf Lemma}
\newenvironment{lemma}{\begin{lmm}}{\end{lmm}}
\theoremstyle{remark}
\newtheorem{rem}{Remark}[section]
\def \ri {{\rm i}}
\def \widebar {\accentset{{\cc@style\underline{\mskip10mu}}}}
\newcommand{\bs}[1]{\boldsymbol{#1}}
\begin{document}

\begin{frontmatter}
\title{Fast Multipole Method for 3-D Linearized Poisson-Boltzmann Equation in Layered Media}

\author[fn1,fn2]{Bo Wang}
\author[fn2]{Wen Zhong Zhang}
\author[fn2]{Wei Cai\corref{cor1}}
\ead{cai@mail.smu.edu}

\cortext[cor1]{Corresponding author}

\address[fn1]{LCSM(MOE), School of Mathematics and Statistics, Hunan Normal University, Changsha, Hunan, 410081, P. R. China.}
\address[fn2]{Department of Mathematics, Southern Methodist University, Dallas, TX 75275, USA.}

\begin{abstract}
	In this paper, we propose a fast multipole method (FMM) for 3-D linearized Poisson-Boltzmann (PB) equation in layered media. The main framework of the algorithm is analogous to the FMM for Helmholtz and Laplace equation in layered media \cite{wang2019fast, wang2019fastlaplace}, using an extension of the Funk-Hecke formula for pure imaginary wave number. Moreover, a recurrence formula is provided for the run-time computation of the Sommerfeld-type integrals used in the FMM algorithm. Due to the similarity between Helmholtz and linearized PB equation, the recurrence formula can also be used for the FMM of Helmholtz equation in layered media with minor changes as mentioned in \cite{wang2019fast}. Numerical results validate that the FMM for interactions of charges under screen's potentials in layered media has  the same accuracy and CPU complexity as the classic FMM for charge interactions in free space.
\end{abstract}
\begin{keyword}
	Fast multipole method, Poisson-Boltzmann equation, layered media, spherical harmonic expansion, equivalent polarization source
\end{keyword}
\end{frontmatter}

\section{Introduction}
In this paper, we continue our research on the FMM associated with the Green's function of elliptic equations
\begin{equation}\label{PBequation}
a_{\ell}\big[\boldsymbol{\Delta}u_{\ell\ell'}(\boldsymbol{r},\boldsymbol{r}^{\prime
})+\kappa_{\ell}^2 u_{\ell\ell'}(\boldsymbol{r},\boldsymbol{r}^{\prime
})\big]=-\delta(\boldsymbol{r},\boldsymbol{r}^{\prime}),
\end{equation}
in layered media, where $\{a_{\ell}, \kappa_{\ell}\}$ are parameters in the $\ell$-th layer, $\delta(\boldsymbol{r},\boldsymbol{r}^{\prime})$ is the
Dirac delta function and $\bs r$ and $\bs r'$ are two points in the $\ell$ and $\ell'$-th layers, respectively. More general settings about the layered structure will be discussed later. Three typical cases with parameters $\{a_{\ell}, \kappa_{\ell}\}$ to be $\{1, k_{\ell}\}, \{\varepsilon_{\ell}, 0\}$ or $\{\varepsilon_{\ell},\ri\lambda_{\ell}\}$, such that $\varepsilon_{\ell}, k_{\ell}, \lambda_{\ell}>0$ are related to three categories of important applications. Typical examples among them are wave propagation, static electromagnetics and nuclear physics. Recently, we have developed FMM for Helmholtz and Laplace equations (cf. \cite{wang2019fast, wang2019fastlaplace}) which are corresponding to the cases with parameters $\{a_{\ell}=1, \kappa_{\ell}=k_{\ell}\}$ and $\{a_{\ell}=\varepsilon_{\ell}, \kappa_{\ell}=0\}$. Now, we continue our work to consider the linearized Poisson-Boltzmann equation, i.e., the case $\{a_{\ell}=\varepsilon_{\ell}, \kappa_{\ell}=\ri\lambda_{\ell}\}$ with $\lambda_{\ell}>0$.

Problems modeling by linearized Poisson-Boltzmann equations arise in various applications in physics, chemistry and biology when Coulomb forces are damped by screening effects (cf. \cite{juffer1991electric, liang1997computation, lin2014accuracy,cai2013computational}). In the free space case, the Green's function is referred as Yukawa potential or screened Coulomb potential and the classic FMM for Coulomb potential has been successfully extended to Yukawa potential (cf. \cite{greengard2002new, huang2009fmm}) for the reduction of the $O(N^{2})$ cost of computing corresponding $N$ particles (or sources) problem to $O(N)$. Due to the significant efficiency improvement, Yukawa-FMM has been widely used in modern computational biology, chemistry (cf. \cite{lu2008recent, brown2011implementing}).

Similar with the FMM for Coulomb potential \cite{greengard1987fast,greengard1997new}, the mathematical foundation of the Yukawa-FMM is the addition theorem for modified Bessel functions as we will review in the next section. The theory provides optimal approximation for the far field interactions in the free space, and at the meantime implies the major difficulty for the development of FMM in layered media. That is optimal expansion theory for the Green's function in layered media (layered Green's function in short in this paper) has not been developed. Nevertheless, layered media are
usually the basic setting in many applications in biology, e.g., ion channel \cite{lin2014accuracy}. For those applications, layered Green's function is preferably used to describe the interactions to avoid introducing artificial unknowns on the infinite material interfaces. To handle
the interaction of sources embedded in layered media using layered Green's
functions, various approaches have been proposed (cf. \cite{cho2018heterogeneous,wang2020taylor,tausch2003fast,cho2012parallel,ying2004kernel,wang2019kernel}).

Recently, we have proposed a mathematical theory to obtain optimal far field approximation for the layered Green's function of Helmholtz and Laplace equations (cf. \cite{wang2019fast, zhang2018exponential,wang2019fastlaplace}), where the generating function of the Bessel function (2-D case) or a Funk-Hecke formula (3-D case) were used to connect Bessel functions and plane wave functions. With the optimal far field approximation theory, corresponding FMMs for layered Green's function have been implemented. The reason of using Fourier (2-D case) and spherical harmonic (3-D case) expansions of plane waves is that the layered Green's functions have Sommerfeld-type integral representations in which the plane waves are involved. Note that the layered Green's function of the linearized Poisson-Boltzmann equation also has similar integral form as that of Helmholtz equation, we will continue the series research work by investigating the case with pure imaginary parameters $\kappa_{\ell}=\ri\lambda$. Another extension the Funk-Hecke formula with pure imaginary wave number is the key in the derivation of the multipole and local expansions (MEs, LEs) and multipole to local (M2L) translation operators for the reaction components of the layered Green's function. Under the framework proposed in our previous work, the potential due to sources embedded in layered media is decomposed into free space and reaction components and equivalent polarization charges are introduced to re-express the reaction components. The FMM in layered media will then consist of classic Yukawa-FMM for the free space components and FMMs for reaction components based on equivalent polarization sources and the new MEs, LEs and M2L translations. Moreover, in order to avoid making memory consuming pre-computed 3-D tables (cf. \cite{wang2019fast}), we will develop a recurrence formula for efficient computation of the Sommerfeld-type integrals used in the algorithm. The FMMs for the reaction field components are much faster than that for the free space components due to the fact that the introduced equivalent polarization charges are always separated from the associated target charges by a material interface. As a result, the new FMM for sources in layered media costs almost the same as the Yukawa-FMM for the free space case.

The rest of the paper is organized as follows. In Section 2, after a short discussion
on the Green's function in layered media consisting of free
space and reaction components, we present the formulas for the potential induced by sources embedded in layered media. In section 3, we first further extend the Funk-Hecke formula to derive a spherical harmonic expansion for the exponential functions involved in the integral representation of the layered media Green's function of Poisson-Boltzmann equation. By using this expansion, we present a new approach for the derivation of the ME, LE and M2L operators of the free space Green's function. The same approach will be then used to derive MEs, LEs and M2L translation operators for the reaction components of the layered Green's function.  By introducing equivalent polarization charge for each type of the reaction components, the reaction potentials are re-expressed. Then the MEs, LEs and M2L translation operators for the reaction
components are derived based on the new expressions. Combining the original source charges and the equivalent polarization charges associated to each reaction component, the FMMs for reaction components can be implemented.
Section 4 will give numerical results to show the spectral accuracy and $O(N)$ complexity of the proposed FMM for interactions in layered
media. Finally, a conclusion is given in Section 5.

\section{Potential due to sources in layered media}
In this section, the potential induced by sources embedded in layered media is formulated using layered Green's function and then decomposed into a free space and four types of reaction components.

\subsection{Green's function of linearized Poisson-Boltzmann equation in layered media} Consider a layered medium consisting of $L$-interfaces located at $z=d_{\ell
},\ell=0,1,\cdots,L-1$ in Fig. \ref{layerstructure}. The material parameters are given by $\{a_{\ell}=\varepsilon_{\ell},\kappa_{\ell}=\ri\lambda_{\ell}\}_{\ell=0}^L$ where $\varepsilon_{\ell}$ and $\lambda_{\ell}$ are the dielectric constant and the inverse Debye-Huckel length in the $\ell$-th layer.
\begin{figure}[ht!]\label{layerstructure}
	\centering
	\includegraphics[scale=0.7]{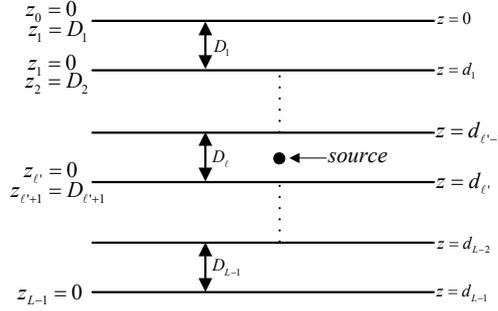}
	\caption{Sketch of the layer structure for general multi-layer media.}
\end{figure}
Suppose we have a point
source at $\boldsymbol{r}^{\prime}=(x^{\prime},y^{\prime},z^{\prime})$ in the
$\ell^{\prime}$-th layer ($d_{\ell^{\prime}}<z^{\prime}<d_{\ell^{\prime}-1}$). Then, the layered Green's function of the linearized PB equation satisfies \eqref{PBequation} at field point $\boldsymbol{r}=(x,y,z)$ in the $\ell$-th layer ($d_{\ell
}<z<d_{\ell}-1$) where $\delta(\boldsymbol{r},\boldsymbol{r}^{\prime})$ is the
Dirac delta function.
By using partial Fourier transform along $x-$ and $y-$directions, the problem can be solved analytically for each layer in $z$ by imposing
transmission conditions at the interface between $\ell$-th and $(\ell-1)$-th
layer ($z=d_{\ell-1})$, \textit{i.e.},
\begin{equation}
u_{\ell-1,\ell'}(x,y,z)=u_{\ell\ell'}(x,y,z),\quad \varepsilon_{\ell-1}\frac{\partial  u_{\ell-1,\ell'}(x,y,z)}{\partial z}=\varepsilon_{\ell}\frac{\partial \widehat u_{\ell\ell'}(k_{x},k_{y},z)}{\partial z},
\end{equation}
as well as decaying conditions in the top and bottom-most layers for
$z\rightarrow\pm\infty$.

Here, we just present the expression of layered Green's function, and the derivation is an analogue to that for layered Green's function of the Helmholtz equation (cf. \cite{wang2020taylor}). The expression of the layered Green's function in the physical domain takes the form
\begin{equation}\label{layeredGreensfun}
u_{\ell\ell^{\prime}}(\boldsymbol{r},\boldsymbol{r}^{\prime})=\begin{cases}
\displaystyle u_{\ell\ell'}^{\text{react}}(\boldsymbol{r},\boldsymbol{r}^{\prime})+\frac{e^{-\lambda_{\ell} |\bs r-\bs r^{\prime}|}}{4\pi\varepsilon_{\ell}
	|\bs r-\bs r^{\prime}|},&\ell=\ell',\\
\displaystyle u_{\ell\ell'}^{\text{react}}(\boldsymbol{r},\boldsymbol{r}^{\prime}), & \text{otherwise},
\end{cases}
\end{equation}
where $u^{\text{react}}_{\ell\ell'}(\bs r, \bs r')$ is the reaction component induced by the layered media. In general, $u^{\text{react}}_{\ell\ell'}(\bs r, \bs r')$ has two components. However, only one component left in the top and bottom layer due to decaying conditions as $z\rightarrow\pm\infty$. Thus, the reaction component has decomposition
\begin{equation}\label{reactioncomponent}
u^{\text{react}}_{\ell\ell'}(\bs r, \bs r')=\begin{cases}
\displaystyle u_{0\ell^{\prime}}^{1}%
(\boldsymbol{r},\boldsymbol{r}^{\prime}),\\
\displaystyle u_{\ell\ell^{\prime}}^{1}%
(\boldsymbol{r},\boldsymbol{r}^{\prime})+u_{\ell\ell^{\prime}}^{2
}(\boldsymbol{r},\boldsymbol{r}^{\prime}), &
0<\ell<L,\\
\displaystyle u_{L\ell^{\prime}}^{2}%
(\boldsymbol{r},\boldsymbol{r}^{\prime}),
\end{cases}
\end{equation}
with components given by Sommerfeld-type integrals:
\begin{equation}%
\begin{cases}
\displaystyle u_{\ell\ell'}^{1}(\bs r, \bs r')=\frac{1}{8\pi^2 }\int_0^{\infty}\int_0^{2\pi}\lambda_{\rho}e^{\ri\bs\lambda_{\alpha}\cdot(\bs\rho-\bs\rho')}\frac{e^{- \lambda_{\ell z} (z-d_{\ell})}}{\lambda_{\ell z}}\psi_{\ell\ell'}^{1}(\lambda_{\rho}, z')d\alpha d\lambda_{\rho},\quad \ell<L,\\[8pt]
\displaystyle u_{\ell\ell'}^{2}(\bs r, \bs r')=\frac{1}{8\pi^2}\int_0^{\infty}\int_0^{2\pi}\lambda_{\rho}e^{\ri\bs\lambda_{\alpha}\cdot(\bs\rho-\bs\rho')}\frac{e^{- \lambda_{\ell z} (d_{\ell-1}-z)}}{\lambda_{\ell z}}\psi_{\ell\ell'}^{2}(\lambda_{\rho}, z')d\alpha d\lambda_{\rho},\quad \ell>0,
\end{cases}
\label{greenfuncomponent}%
\end{equation}
where $\bs\lambda_{\alpha}=\lambda_{\rho}(\cos\alpha,\sin\alpha)$, $\bs\rho=(x, y)$, $\bs \rho'=(x', y')$,
\begin{equation}%
\begin{split}
\psi_{\ell0}^{1}(\lambda_{\rho},z^{\prime})=\begin{cases}
\displaystyle e^{-
	\lambda_{\ell'z}z^{\prime}}\sigma_{\ell0}^{11}(k_{\rho
}),\\[8pt]%
\displaystyle e^{-\lambda_{\ell'z}(z^{\prime}-d_{\ell^{\prime}})}\sigma_{\ell
	\ell^{\prime}}^{11}(\lambda_{\rho})+e^{-\lambda_{\ell'z}(d_{\ell^{\prime}-1}-z^{\prime})}\sigma_{\ell\ell^{\prime}}^{12}(\lambda_{\rho}),\quad0<\ell^{\prime}<L,\\[8pt]%
\displaystyle e^{-
	\lambda_{\ell'z}(d_{L-1}-z^{\prime})}\sigma_{\ell L}^{12
}(\lambda_{\rho}).
\end{cases}\\
\psi_{\ell\ell^{\prime}}^{2}(k_{\rho
},z^{\prime})=
\begin{cases}
\displaystyle e^{-
	\lambda_{\ell'z}z^{\prime}}\sigma_{\ell0}^{21}(k_{\rho
}),\\[8pt]%
\displaystyle e^{-\lambda_{\ell'z}(z^{\prime}-d_{\ell^{\prime}})}%
\sigma_{\ell\ell^{\prime}}^{21}(\lambda_{\rho})+e^{-
	\lambda_{\ell'z}(d_{\ell^{\prime}-1}-z^{\prime})}\sigma_{\ell\ell^{\prime}%
}^{22}(\lambda_{\rho}),\quad0<\ell^{\prime}<L,\\[8pt]%
\displaystyle e^{-
	\lambda_{\ell'z}(d_{L-1}-z^{\prime})}\sigma_{\ell L}^{22
}(\lambda_{\rho}).
\end{cases}
\end{split}
\label{totaldensity}%
\end{equation}
It is worthy to point out that reaction densities $\sigma_{\ell\ell'}^{11}(\lambda_{\rho}), \sigma_{\ell\ell'}^{12}(\lambda_{\rho}), \sigma_{\ell\ell'}^{21}(\lambda_{\rho}), \sigma_{\ell\ell'}^{22}(\lambda_{\rho})$ are only determined by the layered structure and the material parameters $\varepsilon_{\ell}$ and $\lambda_{\ell}$ in each layers. The above are general formulas which are applicable to multi-layered media. Here, we
give explicit formulas (see \eqref{densitythreelayer1},\eqref{densitythreelayer2},
\eqref{densitythreelayer3}) for reaction densities in the cases of three layers as example.
\begin{itemize}
	\item Source in the top layer:
	\begin{equation}\label{densitythreelayer1}
	\begin{split}
	\sigma_{00}^{11}(\lambda_{\rho})=&\frac{\varepsilon_1 \lambda_{1z} (\varepsilon_0 \lambda_{0z} - \varepsilon_2 \lambda_{2z}) \cosh(-d_1\lambda_{1z}) - (\varepsilon_1^2 \lambda_{1z}^2 - \varepsilon_0 \varepsilon_2 \lambda_{0z} \lambda_{2z}) \sinh(-d_1\lambda_{1z})}{\varepsilon_0\kappa(\lambda_{\rho})},\\
	\sigma_{10}^{11}(\lambda_{\rho})=&\frac{\varepsilon_0\lambda_{1z}(\lambda_1 \lambda_{1z}- \varepsilon_2 \lambda_{2z})}{\varepsilon_0\kappa(\lambda_{\rho})},\quad \sigma_{10}^{21}(\lambda_{\rho})=\frac{\varepsilon_0\lambda_{1z}(\varepsilon_1 \lambda_{1z}+ \varepsilon_2 \lambda_{2z}) e^{-d_1\lambda_{1z}}}{\varepsilon_0\kappa(\lambda_{\rho})},\\
	\sigma_{20}^{21}(\lambda_{\rho})=&\frac{2\varepsilon_0 \varepsilon_1 \lambda_{1z}\lambda_{2z} }{\varepsilon_0\kappa(\lambda_{\rho})}.
	\end{split}
	\end{equation}
	\item Source in the middle layer:
	\begin{equation}\label{densitythreelayer2}
	\begin{split}
	\sigma_{01}^{11}(\lambda_{\rho})=&\frac{\varepsilon_1\lambda_{0z}(\varepsilon_{1}\lambda_{1z}-\varepsilon_2\lambda_{2z}) }{\varepsilon_1\kappa(\lambda_{\rho})},\quad
	\sigma_{01}^{12}(\lambda_{\rho})=\frac{\varepsilon_1\lambda_{0z}(\varepsilon_{1}\lambda_{1z}+\varepsilon_2\lambda_{2z})e^{-d_1\lambda_{1z}}}{\varepsilon_1\kappa(\lambda_{\rho})},\\
	\sigma_{11}^{11}(\lambda_{\rho})=&\frac{(\varepsilon_1\lambda_{1z}-\varepsilon_2\lambda_{2z})(\varepsilon_1\lambda_{1z}+\varepsilon_0\lambda_{0z}) e^{-d_1 \lambda_{1z}}}{2\varepsilon_1\kappa(\lambda_{\rho})},\\
	\sigma_{11}^{12}(\lambda_{\rho})=&\frac{(\varepsilon_1\lambda_{1z}-\varepsilon_2\lambda_{2z})(\varepsilon_1\lambda_{1z}-\varepsilon_0\lambda_{0z}) }{2\varepsilon_1\kappa(\lambda_{\rho})},\\
	\sigma_{11}^{21}(\lambda_{\rho})=&\frac{(\varepsilon_1\lambda_{1z}-\varepsilon_2\lambda_{2z})(\varepsilon_1\lambda_{1z}+\varepsilon_0\lambda_{0z})}{2\varepsilon_1\kappa(\lambda_{\rho})},\\
	\sigma_{11}^{22}(\lambda_{\rho})=&\frac{(\varepsilon_1\lambda_{1z}+\varepsilon_2\lambda_{2z})(\varepsilon_1\lambda_{1z}-\varepsilon_0\lambda_{0z})e^{-d_1\lambda_{1z}} }{2\varepsilon_1\kappa(\lambda_{\rho})},\\
	\sigma_{21}^{21}(\lambda_{\rho})=&\frac{\varepsilon_1 \lambda_{2z}(\varepsilon_0\lambda_{0z}+\varepsilon_1\lambda_{1z})e^{-d_1\lambda_{1z}  } }{\varepsilon_1\kappa(\lambda_{\rho})},\quad\sigma_{21}^{22}(\lambda_{\rho})=\frac{\varepsilon_1 \lambda_{2z}(\varepsilon_1\lambda_{1z}-\varepsilon_0\lambda_{0z}) }{\varepsilon_1\kappa(\lambda_{\rho})}
	.
	\end{split}
	\end{equation}
	\item Source in the bottom layer:
	\begin{equation}\label{densitythreelayer3}
	\begin{split}
	\sigma_{02}^{12}(\lambda_{\rho})=&\frac{2\varepsilon_1\lambda_{1z}\varepsilon_2\lambda_{0z}}{\varepsilon_2\kappa(\lambda_{\rho})},\\
	\sigma_{12}^{22}(\lambda_{\rho})=&\frac{\varepsilon_2\lambda_{1z}(\varepsilon_1\lambda_{1z}-\varepsilon_0\lambda_{0z})}{\varepsilon_2\kappa(\lambda_{\rho})},\quad
	\sigma_{12}^{12}(\lambda_{\rho})=\frac{\varepsilon_2\lambda_{1z}(\varepsilon_0\lambda_{0z}+\varepsilon_1\lambda_{1z})e^{-d_1\lambda_{1z}} }{\varepsilon_2\kappa(\lambda_{\rho})},\\
	\sigma_{22}^{22}(\lambda_{\rho})=&\frac{\varepsilon_1 \lambda_{1z} (\varepsilon_2 \lambda_{2z}-\varepsilon_0 \lambda_{0z} ) \cosh(-d_1\lambda_{1z}) - (\varepsilon_1^2 \lambda_{1z}^2 - \varepsilon_0 \varepsilon_2 \lambda_{0z} \lambda_{2z}) \sinh(-d_1\lambda_{1z})}{\varepsilon_2\kappa(\lambda_{\rho})},
	\end{split}
	\end{equation}
\end{itemize}
where
$$\kappa(\lambda_{\rho})=\varepsilon_1 \lambda_{1z} (\varepsilon_0 \lambda_{0z} + \varepsilon_2 \lambda_{2z}) \cosh(-d_1\lambda_{1z}) + (\varepsilon_1^2 \lambda_{1z}^2 + \varepsilon_0 \varepsilon_2 \lambda_{0z} \lambda_{2z}) \sinh(-d_1\lambda_{1z}).$$

\subsection{Components of the potential due to sources embedded in layered media}
Let $\mathscr{P}_{\ell}=\{(Q_{\ell j},\boldsymbol{r}_{\ell j}),$ $j=1,2,\cdots
,N_{\ell}\}$, $\ell=0, 1, \cdots, L$ be $L$ groups of source particles distributed in a multi-layered medium with $L+1$ layers (see Fig. \ref{layerstructure}). The group of particles in $\ell$-th layer is denoted by $\mathscr{P}_{\ell}$.  Then, the potential at $\bs r_{\ell i}$ due to all other particles is given by  the summation
\begin{equation}\label{potential1}
\begin{split}
\Phi_{\ell}(\boldsymbol{r}_{\ell i})=&\sum\limits_{\ell'=0}^{L}\sum\limits_{j=1}^{N_{\ell'}}Q_{\ell' j}u_{\ell\ell'}(\bs r_{\ell i},\bs r_{\ell' j})\\
=&\sum\limits_{j=1,j\neq i}^{N_{\ell}}Q_{\ell j}\frac{e^{-\lambda_{\ell}|\bs r_{\ell i}-\bs r_{\ell j}|}}{4\pi\varepsilon_{\ell}|\bs r_{\ell i}-\bs r_{\ell j}|}+\sum\limits_{\ell'=0}^{L}\sum\limits_{j=1}^{N_{\ell'}}Q_{\ell' j}u_{\ell\ell'}^{\text{react}}(\bs r_{\ell i},\bs r_{\ell' j}).
\end{split}
\end{equation}
By expressions in \eqref{greenfuncomponent} and \eqref{totaldensity}, $u_{\ell\ell'}^{1}(\bs r, \bs r')$ and $u_{\ell\ell'}^{2}(\bs r, \bs r')$ have further decomposition
\begin{equation}\label{decomposition2}
u_{\ell\ell'}^{\mathfrak a}(\bs r, \bs r')=\begin{cases}
\displaystyle u_{\ell 0}^{\mathfrak a1}(\bs r, \bs r'),\\
\displaystyle u_{\ell\ell'}^{\mathfrak a1}(\bs r, \bs r')+u_{\ell\ell'}^{\mathfrak a2}(\bs r, \bs r'),\quad 0<\ell<L,\quad \mathfrak a=1,2,\\
\displaystyle u_{\ell L}^{\mathfrak a2}(\bs r, \bs r'),\\
\end{cases}
\end{equation}
while each component has Sommerfeld-type integral representation:
\begin{equation}\label{generalcomponents}
u_{\ell\ell'}^{\mathfrak{ab}}(\bs r, \bs r')=\frac{1}{8\pi^2 }\int_0^{\infty}\int_0^{2\pi}\frac{\lambda_{\rho}}{\lambda_{\ell z}}\mathcal{E}_{\ell\ell'}^{\mathfrak{ab}}(\bs r, \bs r')\sigma_{\ell\ell'}^{\mathfrak{ab}}(\lambda_{\rho})d\alpha d\lambda_{\rho},\quad \mathfrak{a, b}=1, 2.
\end{equation}
Here, $\bs\rho'=(x', y')$ is the source coordinates in $x-y$ plane, $\{\mathcal{E}_{\ell\ell'}^{\mathfrak{ab}}(\bs r, \bs r')\}_{\mathfrak{a, b}=1,2}$ are exponential functions defined as
\begin{equation}\label{zexponential}
\begin{split}
&\mathcal{E}_{\ell\ell'}^{11}(\bs r,\bs r'):=e^{\ri\bs\lambda_{\alpha}\cdot(\bs\rho-\bs\rho')- \lambda_{\ell z} (z-d_{\ell})-\lambda_{\ell'z}(z'-d_{\ell'})},\\
& \mathcal{E}_{\ell\ell'}^{12}(\bs r,\bs r'):=e^{\ri\bs\lambda_{\alpha}\cdot(\bs\rho-\bs\rho')-\lambda_{\ell z} (z-d_{\ell})-\lambda_{\ell'z}(d_{\ell'-1}-z')},\\
&\mathcal{E}_{\ell\ell'}^{21}(\bs r,\bs r'):=e^{\ri\bs\lambda_{\alpha}\cdot(\bs\rho-\bs\rho')-\lambda_{\ell z} (d_{\ell-1}-z)-\lambda_{\ell'z}(z'-d_{\ell'})},\\
& \mathcal{E}_{\ell\ell'}^{22}(\bs r,\bs r'):=e^{\ri\bs\lambda_{\alpha}\cdot(\bs\rho-\bs\rho')-\lambda_{\ell z} (d_{\ell-1}-z)-\lambda_{\ell'z}(d_{\ell'-1}-z')}.
\end{split}
\end{equation}

Since the reaction components of the Green's function in layered media have different expressions \eqref{greenfuncomponent} and \eqref{decomposition2} for source and target particles in different layers, it is necessary to perform calculation individually for
interactions between any two groups of particles among the $L+1$ groups
$\{\mathscr{P}_{\ell}\}_{\ell=0}^{L}$. Applying expressions \eqref{reactioncomponent}, \eqref{greenfuncomponent} and \eqref{decomposition2} in \eqref{potential1}, we obtain
\begin{equation}\label{totalinteraction}
\begin{split}
\Phi_{\ell}(\boldsymbol{r}_{\ell i})=&\Phi_{\ell}^{\text{free}}(\boldsymbol{r}_{\ell 	i})+\Phi_{\ell}^{\text{react}}(\boldsymbol{r}_{\ell i})\\
=&\Phi_{\ell}^{\text{free}}(\boldsymbol{r}_{\ell i})+\sum\limits_{\ell^{\prime}=0}^{L-1}[\Phi_{\ell\ell^{\prime}}^{11
}(\boldsymbol{r}_{\ell i})+\Phi_{\ell\ell^{\prime}}^{21
}(\boldsymbol{r}_{\ell i})]+\sum\limits_{\ell^{\prime}=1}^{L}[\Phi_{\ell\ell^{\prime}}^{12
}(\boldsymbol{r}_{\ell i})+\Phi_{\ell\ell^{\prime}}^{22
}(\boldsymbol{r}_{\ell i})],
\end{split}
\end{equation}
where
\begin{equation}\label{freereactioncomponents}
\begin{split}
&  \Phi_{\ell}^{\text{free}}(\boldsymbol{r}_{\ell i}):=\sum\limits_{j=1,j\neq
	i}^{N_{\ell}}Q_{\ell j}\frac{e^{-\lambda_{\ell}|\bs r_{\ell i}-\bs r_{\ell j}|}}{4\pi\varepsilon_{\ell}|\boldsymbol{r}_{\ell
		i}-\boldsymbol{r}_{\ell j}|},\quad  \Phi_{\ell\ell^{\prime}}^{\mathfrak{ab}}(\boldsymbol{r}_{\ell i}):=\sum
\limits_{j=1}^{N_{\ell^{\prime}}}Q_{\ell^{\prime}j}u_{\ell\ell^{\prime}%
}^{\mathfrak{ab}}(\boldsymbol{r}_{\ell i},\boldsymbol{r}_{\ell^{\prime}j}%
).
\end{split}
\end{equation}
It is clearly that the free space component $\Phi_{\ell}^{\text{free}}(\boldsymbol{r}_{\ell i})$ can be computed using Yukawa-FMM. Therefore, we only focus on the computation of the reaction components $\{\Phi_{\ell\ell^{\prime}}^{\mathfrak{ab}}(\boldsymbol{r}_{\ell i})\}$, $\mathfrak{a, b}=1, 2$ in the $\ell$-th layer.

\section{FMM for 3-D linearized Poisson-Boltzmann equation in layered media}
In this section, we first review the MEs and LEs for the free space Green's function of the linearized PB equation and the corresponding shifting/translation operators. They are the key formulas used in the Yukawa-FMM which we will adopt for the computation of the free space components given in \eqref{freereactioncomponents}. Then, a new derivation for the ME and LE using an integral representation of modified spherical Bessel functions are presented. This new technique will be applied to derive MEs, LEs and M2L translations in the development of the FMMs for the reaction components of the layered media Green's function later on.

\subsection{The multipole and local expansions of free space Green's function}
Define the spherical harmonics
\begin{equation}
Y_n^m(\theta,\phi)=(-1)^m\sqrt{\frac{2n+1}{4\pi}\frac{(n-m)!}{(n+m)!}}P_n^m(\cos\theta)e^{\ri m\phi}=\widehat P_n^m(\cos\theta)e^{\ri m\phi},
\end{equation}
for $n=0, 1, \cdots$, $0 \le |m| \le n$, where $P_n^m(\cos\theta)$ is the associated Legendre function and $\widehat P_n^m(\cos\theta)$ is its normalized version.
Suppose $\bs r_j=(r_j, \theta_j,\varphi_j)$ is the position vector of a point $P$ in spherical coordinates with respect to a given center $O_j$ for $j=1, 2$, and $\bs b=(b, \alpha,\beta)$ is the position vector of $O_1$ with respect to $O_2$, such that $\bs r_2=\bs r_1+\bs b$.  By the relations
\begin{equation}
k_n(z)=-\frac{\pi}{2}\ri^nh_n^{(1)}(\ri z), \quad i_n(z)=\ri^{-n}j_n(iz),
\end{equation}
and the addition theorems of spherical Bessel functions (cf. \cite{clercx1993alternative}), there holds the following addition theorems.
\begin{theorem}\label{zeroorderaddthm}
	Let $\bs r_2=\bs r_1+\bs b$. Then
	\begin{equation}\label{sphbessel00}
	 k_0(\lambda r_2)=4\pi\sum\limits_{n=0}^{\infty}\sum\limits_{m=-n}^n(-1)^nk_n(\lambda b)\overline{Y_n^m(\alpha,\beta)}i_n(\lambda r_1)Y_n^m(\theta_1,\varphi_1)
	\end{equation}
	for $r_1<b$, and
	\begin{equation}\label{sphbessel01}
	 k_0(\lambda r_2)=4\pi\sum\limits_{n=0}^{\infty}\sum\limits_{m=-n}^n(-1)^ni_n(\lambda b)\overline{Y_n^m(\alpha,\beta)}k_n(\lambda r_1)Y_n^m(\theta_1,\varphi_1)
	\end{equation}
	for $r_1>b$.
\end{theorem}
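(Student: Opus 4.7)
The plan is to obtain \eqref{sphbessel00}--\eqref{sphbessel01} by analytically continuing the classical addition theorem for the Helmholtz equation to pure imaginary wavenumber $k=\ri\lambda$, then translating the result through the two identities $k_n(z)=-\tfrac{\pi}{2}\ri^n h_n^{(1)}(\ri z)$ and $i_n(z)=\ri^{-n} j_n(\ri z)$ displayed just above the statement.

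First, I would start from the Gegenbauer addition formula for $h_0^{(1)}$ combined with the spherical-harmonic addition theorem for $P_n(\cos\gamma)$, producing
\[
h_0^{(1)}(k|\bs{A}-\bs{B}|)=4\pi\sum_{n=0}^{\infty}\sum_{m=-n}^{n} h_n^{(1)}(kr_>)j_n(kr_<)Y_n^m(\widehat{\bs A})\overline{Y_n^m(\widehat{\bs B})},
\]
where $r_<=\min(|\bs A|,|\bs B|)$, $r_>=\max(|\bs A|,|\bs B|)$, and $\widehat{\bs A}$, $\widehat{\bs B}$ denote the angular parts. Since the statement uses the convention $\bs r_2=\bs r_1+\bs b$, I would apply this identity with $\bs A=\bs r_1$ and $\bs B=-\bs b$, so that $|\bs A-\bs B|=r_2$. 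The angles of $-\bs b$ are $(\pi-\alpha,\beta+\pi)$, and the parity relation $Y_n^m(\pi-\alpha,\beta+\pi)=(-1)^n Y_n^m(\alpha,\beta)$ is precisely what introduces the overall factor $(-1)^n$ in the target formula. For $r_1<b$ this gives
\[
h_0^{(1)}(kr_2)=4\pi\sum_{n,m}(-1)^n h_n^{(1)}(kb)j_n(kr_1)Y_n^m(\theta_1,\varphi_1)\overline{Y_n^m(\alpha,\beta)},
\]
and for $r_1>b$ the roles of $r_1$ and $b$ in the arguments of $h_n^{(1)}$ and $j_n$ are interchanged.

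Next I would specialise to $k=\ri\lambda$ and rewrite both sides in terms of modified spherical Bessel functions. The above identities yield $h_0^{(1)}(\ri\lambda r_2)=-\tfrac{2}{\pi}k_0(\lambda r_2)$, $h_n^{(1)}(\ri\lambda b)=-\tfrac{2}{\pi}\ri^{-n}k_n(\lambda b)$, and $j_n(\ri\lambda r_1)=\ri^n i_n(\lambda r_1)$, so inside the sum the phases $\ri^{-n}\cdot\ri^n$ cancel and the two prefactors $-\tfrac{2}{\pi}$ cancel upon dividing both sides by $-\tfrac{2}{\pi}$. What remains is exactly \eqref{sphbessel00}; the case $r_1>b$ delivers \eqref{sphbessel01} by the same calculation with the arguments of $i_n$ and $k_n$ swapped.

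The only nontrivial bookkeeping is locating the origin of the $(-1)^n$ prefactor: it does \emph{not} arise from the analytic continuation, since the $\ri$-powers coming from $k_n$ and $i_n$ cancel exactly. It instead comes from the geometric reflection $\bs b\mapsto -\bs b$ required to align the convention $\bs r_2=\bs r_1+\bs b$ with the standard Gegenbauer splitting for $|\bs A-\bs B|$, most transparently tracked via $P_n(-\cos\gamma)=(-1)^n P_n(\cos\gamma)$ or equivalently via the parity of spherical harmonics under inversion. Once this sign is handled, the rest of the proof is a single line of substitution.
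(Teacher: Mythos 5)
Your proposal is correct and follows essentially the same route as the paper, which obtains the theorem precisely by combining the classical spherical Bessel/Hankel addition theorems with the relations $k_n(z)=-\tfrac{\pi}{2}\ri^n h_n^{(1)}(\ri z)$ and $i_n(z)=\ri^{-n}j_n(\ri z)$ (citing Clercx--Schram), the paper itself giving no further details. Your explicit tracking of the $(-1)^n$ factor via the parity $Y_n^m(\pi-\alpha,\beta+\pi)=(-1)^nY_n^m(\alpha,\beta)$ under $\bs b\mapsto-\bs b$, and the observation that the $\ri$-powers cancel, is a correct and useful filling-in of the bookkeeping the paper leaves implicit.
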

\begin{theorem}\label{additionthmbesselj}
	Let $\bs r_2=\bs r_1+\bs b$. Then
	\begin{equation}
	i_n(\lambda r_2)Y_n^m(\theta_2,\varphi_2)=\sum\limits_{\nu=0}^{\infty}\sum\limits_{\mu=-\nu}^{\nu}\widehat{S}_{n\nu}^{m\mu}(\bs b)i_{\nu}(\lambda r_1)Y_{\nu}^{\mu}(\theta_1,\varphi_1),
	\end{equation}
	where
	\begin{equation}
	\label{spmhat3d}
	\widehat{S}_{n\nu}^{m\mu}(\bs b)=4\pi\sum\limits_{q=0}^{\infty}(-1)^{\nu-n+m+q}i_q(\lambda b)\overline{Y_q^{\mu-m}(\alpha,\beta)}\mathcal{G}(n,m;\nu,-\mu;q),
	\end{equation}
	with $\mathcal{G}(n,m;\nu,-\mu;q)$ being the Gaunt coefficient.
\end{theorem}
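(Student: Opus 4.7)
The plan is to deduce Theorem~\ref{additionthmbesselj} from the analogous translation theorem for ordinary (Helmholtz-case) spherical Bessel functions $j_n Y_n^m$ derived in Clercx~\cite{clercx1993alternative}, by analytically continuing in the wavenumber. Concretely, first I would write the classical Helmholtz identity $j_n(kr_2)Y_n^m(\theta_2,\varphi_2)=\sum_{\nu,\mu}S_{n\nu}^{m\mu}(\bs b;k)j_\nu(kr_1)Y_\nu^\mu(\theta_1,\varphi_1)$ whose translation coefficient takes the form
\begin{equation*}
S_{n\nu}^{m\mu}(\bs b;k)=4\pi\sum_{q}\ri^{\nu-n+q}j_q(kb)\overline{Y_q^{\mu-m}(\alpha,\beta)}\mathcal G(n,m;\nu,-\mu;q),
\end{equation*}
then specialize $k=\ri\lambda$, and finally apply the identity $i_s(z)=\ri^{-s}j_s(\ri z)$ already recorded in the excerpt to every Bessel factor and simplify phase factors to reach the claimed real sign $(-1)^{\nu-n+m+q}$.

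Substituting $k=\ri\lambda$ turns $j_n(\ri\lambda r_2)=\ri^{n}i_n(\lambda r_2)$, $j_\nu(\ri\lambda r_1)=\ri^{\nu}i_\nu(\lambda r_1)$, and $j_q(\ri\lambda b)=\ri^{q}i_q(\lambda b)$. After cancelling the overall $\ri^{n}$ on the left-hand side and redistributing the inner $\ri^{\nu}$ and $\ri^{q}$ phases into the translation coefficient, each summand in $\widehat S$ picks up a combined factor $\ri^{\nu-n+2q}=\ri^{\nu-n}(-1)^{q}$. The parity selection rule of the Gaunt integral (vanishing unless $n+\nu+q$ is even, with $|n-\nu|\le q\le n+\nu$) then collapses the residual $\ri^{\nu-n}$ factor to a real sign, and after pulling out the $(-1)^{m}$ produced by the conjugation identity $\overline{Y_q^{\mu-m}(\alpha,\beta)}=(-1)^{\mu-m}Y_q^{m-\mu}(\alpha,\beta)$ together with the index symmetries of $\mathcal G$, one obtains exactly $(-1)^{\nu-n+m+q}$ as claimed.

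The main obstacle is the sign bookkeeping in the last step: different sources normalize $Y_n^m$ with or without the Condon--Shortley phase, order the Gaunt indices differently, and adopt different sign conventions in the inner phase $\ri^{\nu-n+q}$ of the Helmholtz addition theorem, so one must carefully reconcile the excerpt's convention with Clercx's in order to make the $+m$ appear in the exponent. By contrast, the analytic continuation $k\to\ri\lambda$ itself is routine: every $j_s$ is entire in $k$, the inner $q$-sum is finite by the triangle rule $|n-\nu|\le q\le n+\nu$, and the outer $(\nu,\mu)$ series converges absolutely on compact subsets of $\bs r_1$ thanks to the factorial decay $i_q(\lambda b)\sim b^{q}/(2q+1)!!$ at fixed $b$ inherited from the corresponding bounds on $j_q$.
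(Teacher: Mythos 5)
Your proposal follows essentially the same route as the paper: the paper itself gives no more than a one-line derivation, asserting that these addition theorems follow from the classical Helmholtz-case addition theorems of Clercx and Schram via the relations $i_n(z)=\ri^{-n}j_n(\ri z)$ and $k_n(z)=-\frac{\pi}{2}\ri^n h_n^{(1)}(\ri z)$, which is exactly the analytic continuation $k\to\ri\lambda$ you carry out in detail. Your phase bookkeeping (using the parity rule $n+\nu+q$ even so that $\ri^{\nu-n}\cdot\ri^{\nu-n+q}\cdot\ri^{q}$ collapses to the real sign, with the residual $(-1)^m$ absorbed into the convention of the source formula) is consistent with the stated coefficient $(-1)^{\nu-n+m+q}$, so the argument is sound.
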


\begin{theorem}
	\label{thmhlynm}
	Let $\bs r_2=\bs r_1+\bs b$. Then
	\begin{equation}
	k_n(\lambda r_2)Y_n^m(\theta_2,\varphi_2)=\sum\limits_{\nu=0}^{\infty}\sum\limits_{\mu=-\nu}^{\nu}{S}_{n\nu}^{m\mu}(\bs b)i_{\nu}(\lambda r_1)Y_{\nu}^{\mu}(\theta_1,\varphi_1),
	\end{equation}
	for $r_1<b$, and
	\begin{equation}
	k^{(1)}_n(\lambda r_2)Y_n^m(\theta_2,\varphi_2)=\sum\limits_{\nu=0}^{\infty}\sum\limits_{\mu=-\nu}^{\nu}(-1)^{\nu-n+m}\widehat{S}_{n\nu}^{m\mu}(\bs b)k^{(1)}_{\nu}(\lambda r_1)Y_{\nu}^{\mu}(\theta_1,\varphi_1),
	\end{equation}
	for $r_1>b$, where $\widehat{S}_{n\nu}^{m\mu}(\bs b)$ is given by \eqref{spmhat3d} and
	\begin{equation}\label{singularseparation}
	\begin{split}
	{S}_{n\nu}^{m\mu}(\bs b)
	&=2\pi^2(-1)^{m+\nu+1}\sum\limits_{q=0}^{\infty}k_q(\lambda b)\overline{Y_q^{\mu-m}(\alpha,\beta)}\mathcal{G}(n,m;\nu,-\mu;q),
	\end{split}
	\end{equation}
	and $\mathcal{G}(n,m;\nu,-\mu;q)$ is a Gaunt coefficient.
\end{theorem}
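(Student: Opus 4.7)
The plan is to obtain both formulas from the classical addition theorems for the spherical Hankel function $h_n^{(1)}$ via the analytic continuation $k \mapsto \ri\lambda$, using the identities $k_n(z) = -\tfrac{\pi}{2}\ri^n h_n^{(1)}(\ri z)$ and $i_n(z) = \ri^{-n} j_n(\ri z)$ already recorded in the excerpt. This strategy mirrors exactly the way Theorems \ref{zeroorderaddthm} and \ref{additionthmbesselj} are derived from their Helmholtz counterparts; the only new ingredient here is the Helmholtz expansion of an outgoing multipole at a shifted origin, which itself is standard (cf.\ Clercx--Schram).

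For the first formula ($r_1 < b$) I would start from the Helmholtz addition theorem expanding $h_n^{(1)}(kr_2)Y_n^m(\theta_2,\varphi_2)$ into a double sum in $\nu,\mu$ of $j_\nu(kr_1)Y_\nu^\mu(\theta_1,\varphi_1)$ with the standard Gaunt-series coefficient
\[
\mathcal{A}_{n\nu}^{m\mu}(\bs b;k)\;=\;4\pi\sum_q h_q^{(1)}(kb)\,\overline{Y_q^{\mu-m}(\alpha,\beta)}\,\mathcal{G}(n,m;\nu,-\mu;q)
\]
(up to a universal numerical factor). Substituting $k = \ri\lambda$ and then converting $h_n^{(1)}(\ri\lambda r_2) \mapsto k_n(\lambda r_2)$, $h_q^{(1)}(\ri\lambda b) \mapsto k_q(\lambda b)$, and $j_\nu(\ri\lambda r_1) \mapsto i_\nu(\lambda r_1)$ via the identities above produces the desired expansion. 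The prefactor $2\pi^2(-1)^{m+\nu+1}$ of $S_{n\nu}^{m\mu}(\bs b)$ in \eqref{singularseparation} is what remains after collecting the three powers of $\ri$ from the Bessel conversions and invoking the parity $n+\nu+q$ even forced by the Gaunt coefficient to reduce $\ri^{\text{integer}}$ to a real $\pm 1$.

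For the second formula ($r_1>b$) the starting point is the companion Helmholtz addition theorem expanding $h_n^{(1)}(kr_2)$ into $h_\nu^{(1)}(kr_1)$ with coefficients built from $j_q(kb)$. The same substitution $k\mapsto\ri\lambda$ converts $h_\nu^{(1)}(\ri\lambda r_1) \mapsto k_\nu^{(1)}(\lambda r_1)$ and $j_q(\ri\lambda b) \mapsto i_q(\lambda b)$, reproducing the kernel $\widehat S_{n\nu}^{m\mu}(\bs b)$ of Theorem \ref{additionthmbesselj} modulated by the extra prefactor $(-1)^{\nu-n+m}$. The asymmetry between $k_n$ and $i_n$ under the substitution produces an additional factor $\ri^{n+\nu}$ relative to the regular-to-regular case of Theorem \ref{additionthmbesselj}, which collapses on the Gaunt-support set to $(-1)^{\nu-n}$; the residual $(-1)^m$ comes from the conjugated spherical harmonic.

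The main obstacle is the phase bookkeeping. Uniform convergence on compact subsets of each region is inherited directly from the Helmholtz expansions, because $k\mapsto\ri\lambda$ only rescales each Bessel value by a bounded phase, so dominated convergence justifies the termwise substitution. What requires care is confirming that the powers of $\ri$ emerging from the three Bessel conversions combine with the Gaunt parity to yield \emph{precisely} the signs $2\pi^2(-1)^{m+\nu+1}$ and $(-1)^{\nu-n+m}$ claimed in \eqref{singularseparation} and in the second formula. I would pin down the overall normalization by checking the low-order case $n=\nu=0$ (which reduces to Theorem \ref{zeroorderaddthm}) before arguing from the general Gaunt-series structure.
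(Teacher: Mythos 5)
Your proposal follows essentially the same route as the paper: the paper derives Theorem \ref{thmhlynm} (together with Theorems \ref{zeroorderaddthm} and \ref{additionthmbesselj}) precisely by combining the relations $k_n(z)=-\frac{\pi}{2}\ri^n h_n^{(1)}(\ri z)$, $i_n(z)=\ri^{-n}j_n(\ri z)$ with the Helmholtz addition theorems of Clercx--Schram, i.e.\ the substitution $k\mapsto\ri\lambda$ you describe. Your extra care with the $\ri$-phase bookkeeping via the Gaunt parity constraint and the $n=\nu=0$ consistency check against Theorem \ref{zeroorderaddthm} is exactly the detail the paper leaves implicit, so the attempt is correct and not a different method.
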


The Gaunt coefficient $\mathcal{G}(n,m;\nu,\mu;q)$ is defined using the Wigner $3-j$ symbol. Although, there are explicit formulas \eqref{spmhat3d} and \eqref{singularseparation} for the separation matrices $\widehat{S}_{n\nu}^{m\mu}(\bs b)$ and ${S}_{n\nu}^{m\mu}(\bs b)$, they are too complicated to be used directly for practical computations. Recurrence formulas (cf. \cite{chew1992recurrence,gumerov2004recursions}) are usually more preferable for their computations.

\begin{figure}[ht!]\label{3dspherical}
	\centering
	\includegraphics[scale=0.9]{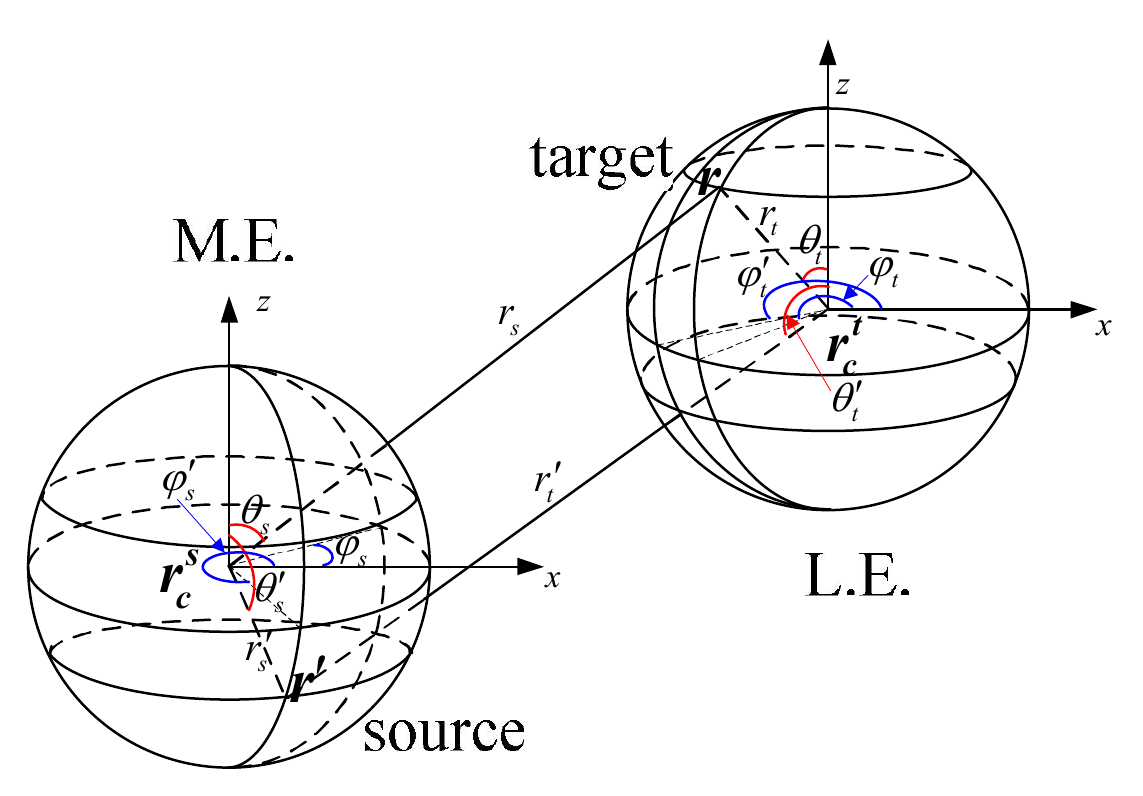}
	\caption{Spherical coordinates used in multipole and local expansions.}
\end{figure}
With these addition theorems, we can present the  multipole and local expansions used in the Yukawa-FMM for Yukawa potential (cf. \cite{greengard2002new, huang2009fmm}). Consider the free space Green's function of the linearized PB equation with a source and a target at $\bs r'$ and $\bs r$, respectively. By using the addition Theorem \ref{zeroorderaddthm}, we have the ME with respect to a source center $\bs r_c^s$:
\begin{equation}\label{freespace3dmulexp}
k_0(\lambda|\bs r-\bs r'|)=\frac{\pi}{2}\frac{e^{-\lambda|\bs r-\bs r'|}}{\lambda|\bs r-\bs r'|}=\sum\limits_{n=0}^{\infty}\sum\limits_{m=-n}^nM_{nm}k_n(\lambda r_s)Y_n^m(\theta_s,\varphi_s),
\end{equation}
and the LE with respect to a target center $\bs r_c^t$:
\begin{equation}\label{freespace3dlocexp}
k_0(\lambda|\bs r-\bs r'|)=\sum\limits_{|n|=0}^{\infty}\sum\limits_{m=-n}^nL_{nm}i_n(\lambda r_t)Y_n^m(\theta_t,\varphi_t),
\end{equation}
where
\begin{equation}\label{melecoeffree}
M_{nm}=4\pi i_n(\lambda r_s')\overline{Y_n^{m}(\theta'_s,\varphi_s')},\quad L_{nm}=4\pi k_n(\lambda r_t')Y_n^{-m}(\theta_t',\varphi_t'),
\end{equation}
$\bs r_c^s$ is the source center close to $\bs r'$ and $\bs r_c^t$ is the target center close to $\bs r$, $(r_s, \theta_s,\varphi_s)$, $(r_t,\theta_t,\varphi_t)$ are the spherical coordinates of $\bs r-\bs r_c^s$ and $\bs r-\bs r_c^t$, $(r'_s, \theta'_s,\varphi'_s)$, $(r'_t,\theta'_t,\varphi'_t)$ are the spherical coordinates of $\bs r'-\bs r_c^s$ and $\bs r'-\bs r_c^t$( see Fig. \ref{3dspherical}).

Applying addition Theorem \ref{thmhlynm} to $k_n^{(1)}(\lambda r_s)Y_n^m(\theta_s, \varphi_s)$ in \eqref{freespace3dmulexp}, the translation from the ME \eqref{freespace3dmulexp} to the LE \eqref{freespace3dlocexp} is given by
\begin{equation}\label{metole}
L_{nm}=\sum\limits_{|\nu|=0}^{\infty}\sum\limits_{\mu=-\nu}^{\nu}{S}_{n\nu}^{m\mu}(\bs r_c^t-\bs r_c^s)M_{\nu\mu}.
\end{equation}
Similarly, we can shift the centers of MEs and LEs via the following translations,
\begin{equation}\label{metome}
\begin{split}
\tilde{M}_{nm}
=&\sum\limits_{\nu=0}^{\infty}\sum\limits_{\mu=-\nu}^{\nu}\overline{\widehat{S}_{n\nu}^{m\mu}(\bs r_c^s-\tilde{\bs r}_c^s)}M_{\nu\mu},\quad \tilde L_{nm}=\sum\limits_{n=0}^{\infty}\sum\limits_{\mu=-\nu}^{\nu} \widehat{S}_{\nu n}^{\mu m}(\tilde{\bs r}_c^t-\bs r_c^t)L_{\nu\mu},
\end{split}
\end{equation}
where
\begin{equation}
\tilde{M}_{nm}=4\pi i_n(\lambda\tilde r_s')\overline{Y_{n}^{m}(\tilde\theta'_s,\tilde\varphi_s')},\quad \tilde L_{nm}=4\pi k_{n}(\lambda\tilde r_t')\overline{Y_{n}^{m}(\tilde\theta_t',\tilde\varphi_t')}
\end{equation}
are the coefficients of the ME and LE with respect to new centers $\tilde{\bs r}_s$ and $\tilde{\bs r}_t$, respectively.

Two important features in \eqref{freespace3dmulexp}-\eqref{freespace3dlocexp} are (i) the source and target coordinates are separated; (ii) they both have exponential convergence. These are the key features for the compression in the Yukawa-FMM (cf. \cite{greengard2002new,huang2009fmm}). Besides using addition theorem, a new approach which can handle Green's function in layered media has been proposed for Helmholtz and Laplace equations in layered media (cf. \cite{wang2019fast, zhang2018exponential,wang2019fastlaplace}).

\subsection{A new derivation for the multipole and local expansions and translation operator}
The Green's function of the linearized PB equation in free space is the modified spherical Bessel function, which has the Sommerfeld-type integral representation
\begin{equation}\label{sommerfeldid}
k_0(\lambda|\bs r|)=\frac{\pi}{2}\frac{e^{-\lambda|\bs r|}}{\lambda|\bs r|}=\frac{1}{4\lambda}\int_0^{\infty}\int_0^{2\pi}\lambda_{\rho}e^{\ri \lambda_{\rho}(x\cos\alpha+y\sin\alpha)}\frac{e^{-\lambda_z|z|}}{\lambda_z}d\alpha  d\lambda_{\rho},
\end{equation}
where $\lambda_z=\sqrt{\lambda^2+\lambda_{\rho}^2}$. In the spectral domain, the source-target separation can be achieved straightforwardly as
\begin{equation}
\label{positivecase}
\begin{split}
k_0(\lambda|\bs r-\bs r'|)=\frac{1}{4\lambda}\int_0^{\infty}\int_0^{2\pi}\lambda_{\rho}\frac{e^{ \bs \lambda\cdot(\bs r-\bs r_c^s)}e^{-\bs \lambda\cdot(\bs r'-\bs r_c^s)}}{\lambda_z}d\alpha  d\lambda_{\rho},\\
k_0(\lambda|\bs r-\bs r'|)=\frac{1}{4\lambda}\int_0^{\infty}\int_0^{2\pi}\lambda_{\rho}\frac{e^{\bs \lambda\cdot(\bs r-\bs r_c^t)}e^{-\bs \lambda\cdot(\bs r'-\bs r^t_c)}}{\lambda_z}d\alpha  d\lambda_{\rho},
\end{split}
\end{equation}
for $z\geq z'$, where $\bs \lambda=(\ri\lambda_{\rho}\cos\alpha, \ri\lambda_{\rho}\sin\alpha,  -\lambda_z).$
Without loss of generality, here we only consider the case $z\geq z'$ for an illustration.

The FMM for Helmholtz equation in layered media use similar source/target separation in its spectral domain. One of the key ingredient is the following extension of the well-known Funk-Hecke formula (cf. \cite{watson,martin2006multiple,wang2019fast}).
\begin{proposition}\label{prop:Funk-Hecke}
	Given $\bs r=(x, y, z)\in \mathbb R^3$, $k>0$, $\alpha\in[0, 2\pi)$ and denoted by $(r,\theta,\varphi)$ the spherical coordinates of $\bs r$, $\bs k=(\sqrt{k^2-k_z^2}\cos\alpha, \sqrt{k^2-k_z^2}\sin\alpha, k_z)$ is a vector of complex entries. Choosing branch \eqref{branch} for $\sqrt{k^2-k_z^2}$ in $e^{\ri \bs k\cdot{\bs r}}$ and $\widehat P_n^m(\frac{k_z}{k})$, then
	\begin{equation}\label{extfunkhecke}
	e^{\ri \bs k\cdot{\bs r}}=\sum\limits_{n=0}^{\infty}\sum\limits_{m=-n}^n A_{n}^m(\bs r)\ri^n\widehat{P}_n^m\Big(\frac{k_z}{k}\Big)e^{-\ri m\alpha}=\sum\limits_{n=0}^{\infty}\sum\limits_{m=-n}^n \overline{A_{n}^m(\bs r)}\ri^n\widehat{P}_n^m\Big(\frac{k_z}{k}\Big)e^{\ri m\alpha},
	\end{equation}
	holds for all $k_z\in\mathbb C$, where
	$$A_{n}^m(\bs r)=4\pi j_n(kr)Y_n^m(\theta,\varphi).$$
\end{proposition}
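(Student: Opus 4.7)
The plan is to establish the first identity for real $k_z \in [-k,k]$ from the classical Rayleigh plane-wave expansion, extend to $k_z \in \mathbb{C}$ by analytic continuation using the branch \eqref{branch}, and derive the second (conjugated) form from the first by a relabelling $m \mapsto -m$ together with the standard symmetries of spherical harmonics.

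For the real case, I would begin from the Rayleigh / spherical-harmonic expansion
$$e^{\ri \bs k \cdot \bs r} = 4\pi \sum_{n=0}^\infty \sum_{m=-n}^n \ri^n j_n(kr)\, Y_n^m(\theta,\varphi)\, \overline{Y_n^m(\theta_k,\alpha)},$$
valid whenever $\bs k = k(\sin\theta_k\cos\alpha,\sin\theta_k\sin\alpha,\cos\theta_k)$ is a real vector of length $k$. Writing $k_z = k\cos\theta_k \in [-k,k]$ and $Y_n^m(\theta_k,\alpha) = \widehat P_n^m(k_z/k)\,e^{\ri m\alpha}$, and using that $\widehat P_n^m$ is real-valued on $[-1,1]$, the complex conjugation only flips the sign of the azimuthal exponent and yields the first claimed identity on the real segment.

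The central step is the analytic continuation in $k_z$. Under the branch \eqref{branch} for $\sqrt{k^2-k_z^2}$, the left-hand side $e^{\ri \bs k(\alpha,k_z)\cdot \bs r}$ is single-valued and analytic in $k_z \in \mathbb{C}$; on the right, $\widehat P_n^m(k_z/k)$ is a polynomial in $k_z/k$ and $\sqrt{1-(k_z/k)^2}$, and so extends analytically to the same cut domain. Combining the standard estimate $|j_n(kr)| \leq C(kr)^n/(2n+1)!!$ with an at-most-geometric-in-$n$ bound on $|\widehat P_n^m(w)|$ for $w$ in a fixed compact subset of $\mathbb{C}$, one sees that the factorial decay of $j_n$ beats the geometric growth; the series therefore converges absolutely and uniformly on compact subsets of $\mathbb{C}$ and defines an analytic function of $k_z$. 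Since the two analytic functions agree on $[-k,k]$, the identity theorem extends the equality to all $k_z \in \mathbb{C}$.

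The second identity follows from the first by the substitution $m \mapsto -m$ in the sum and the two symmetries $\widehat P_n^{-m}(w) = (-1)^m \widehat P_n^m(w)$ (valid on $\mathbb{C}$ by analytic continuation from $[-1,1]$) and $A_n^{-m}(\bs r) = (-1)^m \overline{A_n^m(\bs r)}$, the latter a direct consequence of the Condon--Shortley relation $Y_n^{-m} = (-1)^m \overline{Y_n^m}$; the two $(-1)^m$ factors cancel and produce exactly the conjugated form. The main obstacle I anticipate is technical rather than conceptual: securing the uniform-in-$k_z$ convergence on compact subsets of $\mathbb{C}$ under the prescribed branch, and verifying that the branch cuts of $\sqrt{k^2-k_z^2}$ on the left and of $(1-(k_z/k)^2)^{1/2}$ appearing inside $\widehat P_n^m(k_z/k)$ on the right are placed consistently, so that the analytic continuation is genuinely single-valued.
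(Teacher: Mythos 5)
Your outline gets the base case, the branch-consistency concern, and the $m\mapsto -m$ symmetry for the second equality right, but the central analytic-continuation step fails as written. With the branch \eqref{branch}, the square root $\sqrt{k^2-k_z^2}$ is discontinuous precisely across the open segment $(-k,k)$: approaching from the upper half-plane gives $+\sqrt{k^2-k_z^2}$, from the lower half-plane $-\sqrt{k^2-k_z^2}$ (the jumps of $\arg(k_z\pm k)$ cancel on $(-\infty,-k)$ but not on $(-k,k)$). Hence both sides of \eqref{extfunkhecke}, viewed as functions of $k_z$ with this branch, are analytic only on $\mathbb{C}\setminus[-k,k]$, and the set on which you know the identity classically --- the real segment $[-k,k]$ --- lies exactly on the branch cut, not in the interior of the domain of analyticity. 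The identity theorem therefore cannot be invoked in the form ``agree on $[-k,k]$, hence agree on $\mathbb{C}$'': the agreement set has no accumulation point inside the region where the two functions are holomorphic, and one-sided boundary agreement on a cut does not propagate into the cut plane (the functions are unbounded, so no maximum-principle argument rescues this either). This is a genuine gap, not a bookkeeping issue, and it is exactly the obstacle your final sentence gestures at without resolving.

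The repair is to run the continuation in a variable that has no cut. Either parametrize $k_z=k\cos\zeta$, $\sqrt{k^2-k_z^2}=k\sin\zeta$ with $\zeta\in\mathbb{C}$, so both sides become entire in $\zeta$ and agree for real $\zeta\in[0,\pi]$, or --- as is done for the analogous Proposition \ref{prop:modiffied-Funk-Hecke} in this paper (Proposition \ref{prop:Funk-Hecke} itself is only quoted here from the earlier Helmholtz paper) --- first extend the single-variable expansion $e^{\ri a z}=\sum_n(2n+1)\ri^n j_n(a)P_n(z)$ to all complex $z$ (the counterpart of Lemma \ref{lemma1}, which needs only the elementary bounds on $j_n$ and on the Legendre polynomial $P_n$), and then apply the extended Legendre addition theorem of Lemma \ref{lemma2} with argument $\beta(k_z/k)=\bs k\cdot\bs r/(kr)$; the latter is a finite, purely algebraic identity in $k_z/k$ and the chosen $\sqrt{1-(k_z/k)^2}$, so all branch bookkeeping is confined to a termwise polynomial identity and no bound on $\widehat P_n^m$ at complex arguments is ever needed. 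By contrast, your route also leans on an ``at-most-geometric-in-$n$'' bound for $\widehat P_n^m(w)$ on compact subsets of $\mathbb{C}$, uniformly in $|m|\le n$; this is true for the normalized functions but is asserted rather than proved, and it is precisely the kind of estimate the paper's two-lemma decomposition is designed to avoid.
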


This extension enlarges the range of the classic Funk-Hecke formula from $k_z\in (-k, k)$ to the whole complex plane by choosing branch
\begin{equation}\label{branch}
\sqrt{k^2-k_z^2}=-\ri \sqrt{r_1r_2}e^{\ri\frac{\theta_1+\theta_2}{2}},
\end{equation}
in the square root function $\sqrt{k^2-k_z^2}$.
Here $(r_i,\theta_i), i=1, 2$ are the modules and principle values of the arguments of complex numbers $k_z+k$ and $k_z-k$, i.e.,
$$k_z+k=r_1e^{\ri\theta_1}, \quad -\pi<\theta_1\leq\pi,\quad  k_z-k=r_2e^{\ri\theta_2},\quad -\pi<\theta_2\leq\pi.$$
There, it is enough to consider the case $k>0$ is a real positive number. Note that the linarized PB equation can be obtained from Helmholtz equation via modification $k\rightarrow\ri\lambda$. Therefore, we shall prove another extension of the Funk-Hecke formula to allow pure imaginary $k=\ri\lambda$.

By using the branch defined in \eqref{branch} for the square roots, we have the extension of the well-known Legendre addition theorem \cite[p.395]{Whi.W27}.
\begin{lemma}\label{lemma2}
	Let $\bs w=(\sqrt{1-w^2}\cos\alpha, \sqrt{1-w^2}\sin\alpha, w)$ be a vector with complex entries, $\theta, \phi$ be the azimuthal angle and polar angles of a unit vector $\hat{\bs r}$. Define
	\begin{equation}
	\beta(w)=w\cos\theta+\sqrt{1-w^2}\sin\theta\cos(\alpha-\phi),
	\end{equation}
	then
	\begin{equation}\label{legendreadd}
	P_n(\beta(w))=\frac{4\pi}{2n+1}\sum\limits_{m=-n}^n\widehat P_n^m(\cos\theta)\widehat P_n^m(w)e^{\ri m(\alpha-\phi)},
	\end{equation}
	for all $w\in\mathbb C$.
\end{lemma}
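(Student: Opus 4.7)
The strategy is to reduce the claimed identity to a polynomial identity in the two indeterminates $w$ and $s$ subject to the relation $s^{2}=1-w^{2}$, and then invoke the classical real-variable Legendre addition theorem on $w\in(-1,1)$ to pin down the polynomial coefficients. Analytic continuation is implicit but not actually needed: once the identity is a formal polynomial identity, it holds for any $w\in\mathbb{C}$ and any value of $s=\sqrt{1-w^{2}}$, so in particular for the branch fixed by \eqref{branch}.

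\emph{Step 1 (algebraic structure of both sides).} I would first show that, for fixed angles $\theta,\alpha,\phi$, each side of \eqref{legendreadd} has the form $A(w)+B(w)\,s$ with $A,B\in\mathbb{C}[w]$. For the LHS this is immediate: $\beta(w)=w\cos\theta+s\sin\theta\cos(\alpha-\phi)$ is affine in $s$, and $P_{n}$ is a polynomial; expanding $P_{n}(\beta(w))$ and using $s^{2}=1-w^{2}$ collapses every even power of $s$ into a polynomial in $w$, leaving only a residual linear-in-$s$ piece. For the RHS, the normalized Ferrers function satisfies $\widehat P_{n}^{m}(w)=c_{nm}\,s^{|m|}\,\frac{d^{|m|}}{dw^{|m|}}P_{n}(w)$ (the negative-$m$ case being a scalar multiple of the positive case), so $\widehat P_{n}^{m}(w)$ is a polynomial in $w$ times $s^{|m|}$; since $s^{|m|}=(1-w^{2})^{\lfloor|m|/2\rfloor}s^{|m|\bmod 2}$, the whole sum on the RHS also has the form $A_R(w)+B_R(w)\,s$.

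\emph{Step 2 (match coefficients via the classical theorem).} Set $A:=A_L-A_R$ and $B:=B_L-B_R$. Taking $w=\cos\theta'$ with $\theta'\in(0,\pi)$ and $s=\sin\theta'=+\sqrt{1-w^{2}}$, the classical Legendre addition theorem gives $A(w)+B(w)\sqrt{1-w^{2}}=0$ for $w\in(-1,1)$. Rearranging and squaring yields
\begin{equation*}
A(w)^{2}=B(w)^{2}(1-w^{2})\qquad\text{for all }w\in(-1,1).
\end{equation*}
Since polynomials agreeing on an infinite set agree in $\mathbb{C}[w]$, this is a polynomial identity. Because $1-w^{2}=(1-w)(1+w)$ is a product of two distinct primes in $\mathbb{C}[w]$, both $1\pm w$ divide $A$; writing $A=(1-w^{2})C$ gives $B^{2}=(1-w^{2})C^{2}$, so by the same reasoning $(1-w^{2})\mid B$, and iterating shows $A$ and $B$ are divisible by arbitrarily high powers of $1-w^{2}$. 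Hence $A=B=0$ in $\mathbb{C}[w]$, i.e.\ $A_L=A_R$ and $B_L=B_R$ as polynomials.

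\emph{Step 3 (conclude).} The equality $A_L(w)+B_L(w)\,s=A_R(w)+B_R(w)\,s$ now holds as a polynomial identity in $w$ for any value of $s$ satisfying $s^{2}=1-w^{2}$. In particular, choosing $s$ as the branch of $\sqrt{1-w^{2}}$ defined analogously to \eqref{branch} (by $1+w=r_{1}e^{\ri\theta_{1}}$, $w-1=r_{2}e^{\ri\theta_{2}}$, $\theta_i\in(-\pi,\pi]$, and $\sqrt{1-w^{2}}=-\ri\sqrt{r_{1}r_{2}}\,e^{\ri(\theta_{1}+\theta_{2})/2}$, which agrees with $+\sin\theta'$ on $(-1,1)$) recovers \eqref{legendreadd} for every $w\in\mathbb{C}$.

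\emph{Main obstacle.} The only genuinely delicate point is Step~1 for negative $m$: one must verify that, under the normalization convention used in the paper (the one implicit in $Y_{n}^{m}=\widehat P_{n}^{m}(\cos\theta)e^{\ri m\phi}$), $\widehat P_{n}^{-|m|}(w)$ really is a scalar multiple of $\widehat P_{n}^{|m|}(w)$, so that every term in the sum contributes cleanly the factor $s^{|m|}$. Once this bookkeeping is handled, the polynomial/divisibility argument in Step~2 is routine and Step~3 is automatic.
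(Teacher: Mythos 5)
Your argument is correct, but it is not the paper's route: the paper gives essentially no proof of this lemma, simply citing the classical addition theorem (Whittaker--Watson, p.~395) and asserting the extension "by using the branch \eqref{branch}", i.e.\ implicitly the same analytic-continuation style of argument that is spelled out for Lemma \ref{lemma1} (agreement on $(-1,1)$ plus analyticity of both sides in $w$ off the branch cut). You instead reduce the statement to a purely algebraic identity: both sides are of the form $A(w)+B(w)s$ with $A,B\in\mathbb{C}[w]$ and $s^2=1-w^2$ (this uses $\widehat P_n^{-m}=(-1)^m\widehat P_n^m$, which you correctly flag and which is standard under the paper's normalization), the classical theorem on $(-1,1)$ with $s=+\sin\theta'$ forces $A_L=A_R$ and $B_L=B_R$ via your divisibility argument (one can shortcut it by comparing the parity of the multiplicity of the root $w=1$ on the two sides of $A^2=B^2(1-w^2)$), and the resulting polynomial identity then holds for every $w\in\mathbb{C}$ and \emph{either} value of the square root, in particular the branch \eqref{branch}. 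What your approach buys is that it is elementary, avoids any discussion of where the branch cut lies or whether $(-1,1)$ sits in the domain of analyticity, remains valid at points on the cut where a continuation argument would need a separate continuity step, and makes transparent that the identity only requires the \emph{same} value of $\sqrt{1-w^2}$ to be used in $\beta(w)$ and in $\widehat P_n^m(w)$; the paper's (implicit) continuation argument is shorter but leans on the citation and on branch bookkeeping it does not carry out.
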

The following Lemma is actually the same conclusion of Lemma 4 in \cite{wang2019fast}. Here we make it more general by enlarge the domain of $a$ to be any complex number.
\begin{lemma}\label{lemma1}
	For any complex number $a$, there holds
	\begin{equation}\label{planewaveexp}
	e^{a z}=\sum\limits_{n=0}^{\infty}(2n+1)i_n(a)P_n(z), \quad\forall z\in\mathbb C,
	\end{equation}
	where $i_n(a)=\sqrt{\frac{\pi}{2a}}I_{n+1/2}(a)$ is the modified  spherical Bessel function of the first kind, $P_n(z)$ is the Legendre polynomial extended to the complex plane.
\end{lemma}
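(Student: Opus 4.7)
The plan is to establish the identity on a classical subdomain and then extend to all of $\mathbb{C}$ by analytic continuation. First, I would take as starting point the real-argument case: for $a\in\mathbb{R}$ and $z\in[-1,1]$, identity \eqref{planewaveexp} is the Rayleigh-type expansion already recorded as Lemma~4 of \cite{wang2019fast} (it follows classically from the generating-function relation between $e^{az}$ and the modified spherical Bessel functions, with orthogonality of the $P_n$ fixing the coefficients uniquely).

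Next, I would show that the right-hand side of \eqref{planewaveexp} extends to a jointly entire function of $(a,z)\in\mathbb{C}^2$. From the entire power-series representation
$$i_n(a)=\frac{\sqrt{\pi}}{2^{n+1}}\,a^n\sum_{k=0}^{\infty}\frac{(a/2)^{2k}}{k!\,\Gamma(k+n+\tfrac{3}{2})},$$
one extracts $|i_n(a)|\le C\,|a|^n e^{|a|^2/4}/[2^n\,\Gamma(n+\tfrac{3}{2})]$ by bounding each $\Gamma(k+n+\tfrac{3}{2})$ below by $\Gamma(n+\tfrac{3}{2})$; the Laplace integral representation of $P_n$ yields $|P_n(z)|\le(|z|+\sqrt{|z|^2+1})^n$. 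On any compact $K\subset\mathbb{C}^2$ these bounds combine to dominate the $n$-th term of the series by $C_K\,n\,M_K^n/\Gamma(n+\tfrac{3}{2})$, which is summable; Weierstrass's theorem then gives joint holomorphy of the sum.

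Two applications of the identity theorem close the argument. Fixing any $z\in[-1,1]$, both sides of \eqref{planewaveexp} are entire in $a$ and agree on $\mathbb{R}$, hence coincide for every $a\in\mathbb{C}$; then, fixing any such $a$, both sides are entire in $z$ and agree on $[-1,1]$, hence coincide for every $z\in\mathbb{C}$. The main obstacle I expect is precisely the $|P_n(z)|$ estimate off the real interval — the trivial bound $|P_n(\cos\theta)|\le 1$ used in the original proof is unavailable here, and one has to invoke the Laplace integral formula (or, equivalently, the large-$n$ asymptotic $P_n(\cosh\tau)\sim e^{n\tau}/\sqrt{2\pi n\sinh\tau}$ for $\tau>0$) to establish uniform convergence on compacta. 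Once that estimate is in place, the analytic continuation step is routine.
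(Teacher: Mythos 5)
Your proposal is correct and takes essentially the same route as the paper: start from the classical expansion on $[-1,1]$, bound the terms $(2n+1)i_n(a)P_n(z)$ to obtain locally uniform convergence of the series, and conclude by analytic continuation (identity theorem). The only differences are cosmetic --- you bound $P_n$ via the Laplace integral rather than the paper's root-product estimate $|P_n(z)|\le 2^n(|z|+1)^n$ and you add an explicit continuation step in $a$ (the paper takes the classical series with complex $a$ as given), which is if anything slightly more careful.
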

\begin{proof}
	Recall the series (cf. \cite[10.60.8]{Olver2010})
	\begin{equation}
	e^{a\cos\theta}=\sum\limits_{n=0}^{\infty}(2n+1)i_n(a)P_n(\cos\theta),
	\end{equation}
	we can see that \eqref{planewaveexp} holds for all $z\in [-1, 1]$. Next, we consider its extension to the whole complex plane. Apparently, $e^{a z}$ is an entire function of $z$. Meanwhile, the spherical Bessel function $i_n(a)$ has the following upper bound (cf. \cite[9.1.62]{Abr.I64})
	\begin{equation}
	|i_n(a)|=|\ri^{-n}j_n(\ri a)|\leq \frac{\Gamma(\frac{3}{2})}{\Gamma(n+\frac{3}{2})}\Big(\frac{a}{2}\Big)^n\leq \frac{1}{n!}\Big(\frac{a}{2}\Big)^n.
	\end{equation}
	Obviously, the extension of the Legendre polynomial $P_n(z)$ to the whole complex plane is a polynomial of degree $n$ with $n$ distinct roots $\{z_j\}_{j=1}^n$ in the interval $[-1, 1]$. Therefore,
	\begin{equation}
	|P_n(z)|=|a_n|\prod\limits_{j=1}^n|z-z_j|\leq 2^n(|z|+1)^n, \quad \forall z\in\mathbb C,
	\end{equation}
	here the estimate $a_n=\frac{(2n)!}{2^n(n!)^2}\leq 2^n$ for the coefficient of the leading term of $P_n(z)$ is used. These upper bounds for $i_n(a)$ and $P_n(z)$ give an estimate
	\begin{equation}
	\sum\limits_{n=0}^{\infty}(2n+1)|i_n(a)P_n(z)|\leq \sum\limits_{n=0}^{\infty}(2n+1)\frac{a^n(|z|+1)^n}{n!}=(2a(|z|+1)+1)e^{a(|z|+1)}.
	\end{equation}
	It is easy to show that the series on the righthand side of \eqref{planewaveexp} converges uniformly in any compact set $D\subset \mathbb C$ and hence converges to an entire function of $z$. By the analytic extension theory, we complete the proof.
\end{proof}

\begin{proposition}\label{prop:modiffied-Funk-Hecke}
	Given $\bs r=(x, y, z)\in \mathbb R^3$, $\lambda >0$, $\alpha\in[0, 2\pi)$ and denoted by $(r,\theta,\varphi)$ the spherical coordinates of $\bs r$, $\bs \lambda=(\ri\lambda_{\rho}\cos\alpha, \ri\lambda_{\rho}\sin\alpha, -\sqrt{\lambda^2+\lambda_{\rho}^2})$ is a vector of complex entries. Choosing the branch \eqref{branch} for $\sqrt{\lambda_z^2-\lambda^2}$ in $e^{\ri \bs \lambda\cdot{\bs r}}$ and $\widehat P_n^m(\frac{\lambda_z}{\lambda})$, then
	\begin{equation}\label{extmodifiedfunkhecke}
	e^{\bs \lambda\cdot{\bs r}}=\sum\limits_{n=0}^{\infty}\sum\limits_{m=-n}^n \overline{B_{n}^m(\bs r)}\widehat{P}_n^m\Big(\frac{\lambda_z}{\lambda}\Big)e^{\ri m\alpha}=\sum\limits_{n=0}^{\infty}\sum\limits_{m=-n}^n B_{n}^m(\bs r)\widehat{P}_n^m\Big(\frac{\lambda_z}{\lambda}\Big)e^{-\ri m\alpha},
	\end{equation}
	holds for all $\lambda_{\rho}\in\mathbb C$, where
	$$B_{n}^m(\bs r)=4\pi (-1)^ni_n(\lambda r)Y_n^m(\theta,\varphi).$$
\end{proposition}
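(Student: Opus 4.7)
The plan is to mirror the derivation of Proposition \ref{prop:Funk-Hecke} with modified spherical Bessel functions in place of the ordinary ones, using Lemmas \ref{lemma1} and \ref{lemma2} as the two main tools. The first step is to rewrite the exponent in a form matching $\beta(w)$ in Lemma \ref{lemma2}. A direct computation from $\bs r = r(\sin\theta\cos\varphi, \sin\theta\sin\varphi, \cos\theta)$ gives
$$
\bs\lambda\cdot\bs r = -\lambda_z r\cos\theta + \ri\lambda_\rho r\sin\theta\cos(\alpha - \varphi),
$$
and transcribing the branch \eqref{branch} to $\sqrt{1-(\lambda_z/\lambda)^2}$ pins down $\sqrt{1-w^2} = -\ri\lambda_\rho/\lambda$ with $w := \lambda_z/\lambda$, so that $\bs\lambda\cdot\bs r = -\lambda r\,\beta(w)$ exactly.

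Next, I would apply Lemma \ref{lemma1} with $a = -\lambda r$ and $z = \beta(w)\in\mathbb{C}$ to obtain
$$
e^{\bs\lambda\cdot\bs r} = \sum_{n=0}^\infty (2n+1)\,i_n(-\lambda r)\,P_n(\beta(w)) = \sum_{n=0}^\infty (2n+1)(-1)^n i_n(\lambda r)\, P_n(\beta(w)),
$$
using the parity $i_n(-z)=(-1)^n i_n(z)$ visible from the series definition of $i_n$. Expanding $P_n(\beta(w))$ via Lemma \ref{lemma2} with $\phi = \varphi$, and identifying $\widehat P_n^m(\cos\theta)e^{-\ri m\varphi} = \overline{Y_n^m(\theta,\varphi)}$ (using that $\widehat P_n^m$ is real), collapses the double sum to the first equality of \eqref{extmodifiedfunkhecke} with the stated $B_n^m(\bs r) = 4\pi(-1)^n i_n(\lambda r)Y_n^m(\theta,\varphi)$. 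The absolute/uniform convergence on compact sets established in the proof of Lemma \ref{lemma1} justifies the rearrangement of the double sum and the extension from real positive $\lambda_\rho$ to all $\lambda_\rho\in\mathbb C$ by analytic continuation.

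The second equality follows by re-indexing $m\mapsto -m$, together with the identity $\widehat P_n^{-m} = (-1)^m \widehat P_n^m$ (a direct consequence of $P_n^{-m} = (-1)^m \frac{(n-m)!}{(n+m)!}P_n^m$ and the normalization of $\widehat P_n^m$) and the reality of $i_n(\lambda r)$; these combine to give $\overline{B_n^{-m}(\bs r)}\widehat P_n^{-m}(\lambda_z/\lambda) = B_n^m(\bs r)\widehat P_n^m(\lambda_z/\lambda)$ term-by-term, so that the $e^{\ri m\alpha}$ form converts to the $e^{-\ri m\alpha}$ form with the correct coefficient. The main technical obstacle, as in the real-$k$ case of Proposition \ref{prop:Funk-Hecke}, is carefully verifying the branch gives exactly $\sqrt{1-w^2} = -\ri\lambda_\rho/\lambda$ rather than the opposite sign; the opposite choice would flip $\bs\lambda\cdot\bs r$ to $+\lambda r\,\beta(w)$ and introduce spurious $(-1)^n$'s that would mismatch the claimed $B_n^m$. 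Once this branch bookkeeping is pinned down, the proof is a mechanical composition of Lemmas \ref{lemma1} and \ref{lemma2}.
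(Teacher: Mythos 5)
Your proof is correct and follows essentially the same route as the paper's: rewrite $\bs\lambda\cdot\bs r$ in the form $a\,\beta(w)$ and compose Lemma \ref{lemma1} with Lemma \ref{lemma2}. The only (immaterial) difference is where the sign is absorbed — you take $a=-\lambda r$, $w=\lambda_z/\lambda$ and invoke $i_n(-z)=(-1)^n i_n(z)$, whereas the paper takes $a=\lambda r$, $w=-\lambda_z/\lambda$ and invokes $\widehat P_n^m(-z)=(-1)^n\widehat P_n^m(z)$ for $z\notin[-1,1]$; both produce the same factor $(-1)^n$ in $B_n^m(\bs r)$.
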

\begin{proof}
	Define
	$$\beta=-\sqrt{1+\frac{\lambda_{\rho}^2}{\lambda^2}}\cos\theta+\ri\frac{\lambda_{\rho}}{\lambda}\sin\theta\cos(\alpha-\phi),$$
	then, $\lambda r\beta=\bs \lambda\cdot\bs r$. Let $a=\lambda r$,  $z=\beta$ in \eqref{planewaveexp}, we have
	\begin{equation}
	e^{\bs \lambda\cdot\bs r}=\sum\limits_{n=0}^{\infty}(2n+1)i_n(\lambda r)P_n(\beta).
	\end{equation}
	Then, the spherical harmonic expansion \eqref{extmodifiedfunkhecke} follows by applying Lemma \ref{lemma2} together with the property $\widehat P_n^m(-z)=(-1)^n\widehat P_n^m(z)$ for all $z\notin [-1, 1]$.
\end{proof}

Applying the spherical harmonic expansion \eqref{extmodifiedfunkhecke} to exponential functions $e^{-\bs \lambda\cdot(\bs r'-\bs r_c^s)}$ and $e^{-\bs \lambda\cdot(\bs r-\bs r_c^t)}$ in \eqref{positivecase} gives
\begin{equation}\label{meinspectraldomain}
k_0(\lambda|\bs r-\bs r'|)=\sum\limits_{n=0}^{\infty}\sum\limits_{m=-n}^{n}\frac{ M_{nm}}{4\lambda}\int_0^{\infty}\int_0^{2\pi}\lambda_{\rho}\frac{e^{\bs \lambda\cdot(\bs r-\bs r_c^s)}}{ \lambda_z}\widehat{P}_n^m\Big(\frac{\lambda_z}{\lambda}\Big)e^{\ri m\alpha}d\alpha  d\lambda_{\rho},
\end{equation}
and
\begin{equation}\label{leinspectraldomain}
k_0(\lambda|\bs r-\bs r'|)=\sum\limits_{n=0}^{\infty}\sum\limits_{m=-n}^{n}\hat L_{nm}i_n(kr_t)Y_n^m(\theta_t,\phi_t),
\end{equation}
for $z\geq z'$, where $M_{nm}$ is defined in \eqref{melecoeffree} and
\begin{equation}
\hat L_{nm}=\frac{(-1)^n}{4\lambda}\int_0^{\infty}\int_0^{2\pi}\lambda_{\rho}\frac{e^{\bs \lambda\cdot(\bs r_c^t-\bs r')}}{ \lambda_z}\widehat{P}_n^m\Big(\frac{\lambda_z}{\lambda}\Big)e^{-\ri m\alpha}d\alpha  d\lambda_{\rho}.
\end{equation}
{For the convergence of the Sommerfeld-type integrals in the above expansions, we only consider centers such that $z_c^s<z$ and $z_c^t>z'$. }
Recall the identity
\begin{equation}\label{wavefunspectralform}
k_n(\lambda|\bs r|)Y_n^m(\theta,\varphi)=\frac{1}{4\lambda}\int_0^{\infty}\int_0^{2\pi}\lambda_{\rho}\frac{e^{\bs \lambda\cdot\bs r}}{ \lambda_z}\widehat P_n^{m}\Big(\frac{\lambda_z}{\lambda}\Big)e^{\ri m\alpha} d\alpha d\lambda_{\rho}
\end{equation}
for $z\geq 0$, we see that \eqref{meinspectraldomain} and \eqref{leinspectraldomain} are exactly the multipole and local expansions \eqref{freespace3dmulexp}-\eqref{freespace3dlocexp} for the case of $z\geq z'$.

To derive the translation from the multipole expansion \eqref{meinspectraldomain} to the local expansion \eqref{leinspectraldomain}, we perform a further splitting in \eqref{meinspectraldomain}
\begin{equation}
e^{\bs \lambda\cdot(\bs r-\bs r_c^s)}=e^{\bs \lambda\cdot(\bs r-\bs r_c^t)}e^{\bs \lambda\cdot(\bs r_c^t-\bs r_c^s)}
\end{equation}
 and apply expansion \eqref{extmodifiedfunkhecke} to obtain the following translation
\begin{equation*}
\begin{split}
L_{nm}=&\frac{(-1)^{n}}{ 2\lambda}\sum\limits_{\nu=0}^{\infty}\sum\limits_{\mu=-\nu}^{\nu}M_{\nu\mu}\int_0^{\infty}\int_0^{2\pi}\lambda_{\rho}\frac{e^{\bs \lambda\cdot(\bs r_c^t-\bs r_c^s)}}{\lambda_z} \widehat P_{n}^{m}\Big(\frac{\lambda_z}{\lambda}\Big)\widehat{P}_{\nu}^{\mu}\Big(\frac{\lambda_z}{\lambda}\Big)e^{\ri(\mu-m)\alpha}d\alpha  d\lambda_{\rho},
\end{split}
\end{equation*}
which implies an integral representation of $S_{n\nu}^{m\mu}(\bs r_c^t-\bs r_c^s)$ in \eqref{metole}. {In order to ensure the convergence of the Sommerfeld-type integral in the translation operator, the centers are also assumed to satisfy $z_c^t>z_c^s$. }

\subsection{Equivalent polarization sources for reaction components}
Note that free space components only involve interactions between sources in the same layer. All interactions between sources in different layers are included in the reaction components. Two groups of sources involved in the computation of a reaction component could be physically very far away from each other as there could be many layers between the source and target layers associated to the reaction component, see Fig. \ref{polarizedsource} (left).

Our recent work on the Helmholtz equation \cite{zhang2018exponential,wang2019fast} has shown that the exponential convergence of the ME and LE for the reaction components $u_{\ell\ell'}^{\mathfrak{ab}}(\bs r, \bs r')$ in fact depends on the distance between the target and a polarization source defined for the source at $\bs r'$, which uses the distance between the source $\bs r'$ and the nearest material interface and always locates next to the nearest interface adjacent to the target. Fig. \ref{sourceimages} illustrates the location of the polarization charge $\bs r'_{\mathfrak{ab}}$ for each of the four types of reaction fields
$\tilde u_{\ell\ell'}^{\mathfrak{ab}},\mathfrak{a},\mathfrak{b}=1,2 $. Specifically, the equivalent polarization sources associated to reaction components $u^{\mathfrak{ab}}_{\ell\ell'}(\bs r, \bs r')$, $\mathfrak{a}, \mathfrak{b} =1, 2$ are set to be  at coordinates (see Fig. \ref{sourceimages})
\begin{equation}\label{eqpolarizedsource}
\begin{split}
&\bs r'_{11}:=(x', y', d_{\ell}-(z'-d_{\ell'})),\quad\quad\bs r'_{12}:=(x', y', d_{\ell}-(d_{\ell'-1}-z')),\\
&\bs r'_{21}:=(x', y', d_{\ell-1}+(z'-d_{\ell'})),\quad \bs r'_{22}:=(x', y', d_{\ell-1}+(d_{\ell'-1}-z')),
\end{split}
\end{equation}
and the reaction potentials are
\begin{equation}\label{generalcomponentsimag}
\begin{split}
\tilde u_{\ell\ell'}^{1\mathfrak b}(\bs r, \bs r'_{1\mathfrak b}):=\frac{1}{8\pi^2 }\int_0^{\infty}\int_0^{2\pi}\frac{\lambda_{\rho}}{\lambda_{\ell z}}{\mathcal E}^{+}_{\ell\ell'}(\bs r, \bs r'_{1\mathfrak b})\sigma_{\ell\ell'}^{1\mathfrak b}(\lambda_{\rho})d\alpha d\lambda_{\rho},\\
\tilde u_{\ell\ell'}^{2\mathfrak b}(\bs r, \bs r'_{2\mathfrak b}):=\frac{1}{8\pi^2 }\int_0^{\infty}\int_0^{2\pi}\frac{\lambda_{\rho}}{\lambda_{\ell z}}\mathcal E^{-}_{\ell\ell'}(\bs r, \bs r'_{2\mathfrak b})\sigma_{\ell\ell'}^{2\mathfrak b}(\lambda_{\rho})d\alpha d\lambda_{\rho}
\end{split}
\end{equation}
where
\begin{equation}\label{mekernelimage}
\begin{split}
&{\mathcal E}^{+}_{\ell\ell'}(\bs r, \bs r'_{1\mathfrak{b}}):=e^{\ri\bs\lambda_{\alpha}\cdot(\bs\rho-\bs\rho'_{1\mathfrak{b}})}e^{-\lambda_{\ell z}(z-d_{\ell})-\lambda_{\ell'z}(d_{\ell}-z^{\prime}_{1\mathfrak{b}})},\\
&{\mathcal E}^{-}_{\ell\ell'}(\bs r, \bs r'_{2\mathfrak{b}}):=e^{\ri\bs\lambda_{\alpha}\cdot(\bs\rho-\bs\rho'_{2\mathfrak{b}})}e^{-\lambda_{\ell z}(d_{\ell-1}-z)-\lambda_{\ell'z}(z^{\prime}_{2\mathfrak{b}}-d_{\ell-1})},
\end{split}
\end{equation}
and $\bs \rho_{\mathfrak{ab}}'=(x'_{\mathfrak{ab}}, y'_{\mathfrak{ab}})$, $z'_{\mathfrak{ab}}$ denote the $x-y$ and  $z$-coordinate of $\bs r'_{\mathfrak{ab}}$, i.e.,
\begin{equation*}
\begin{split}
&z_{11}^{\prime}=d_{\ell}-(z'-d_{\ell'}),\quad z_{12}^{\prime}=d_{\ell}-(d_{\ell'-1}-z'),\\
&z_{21}^{\prime}=d_{\ell-1}+(z'-d_{\ell'}),\quad z_{22}^{\prime}=d_{\ell-1}+(d_{\ell'-1}-z').
\end{split}
\end{equation*}

We can see that the reaction potentials \eqref{generalcomponentsimag} represented by using the equivalent polarization sources has similar form as the Sommerfeld-type integral representation \eqref{generalcomponents}.
\begin{figure}[ht!]
	\center
	\subfigure[$u_{\ell\ell'}^{11}$]{\includegraphics[scale=0.45]{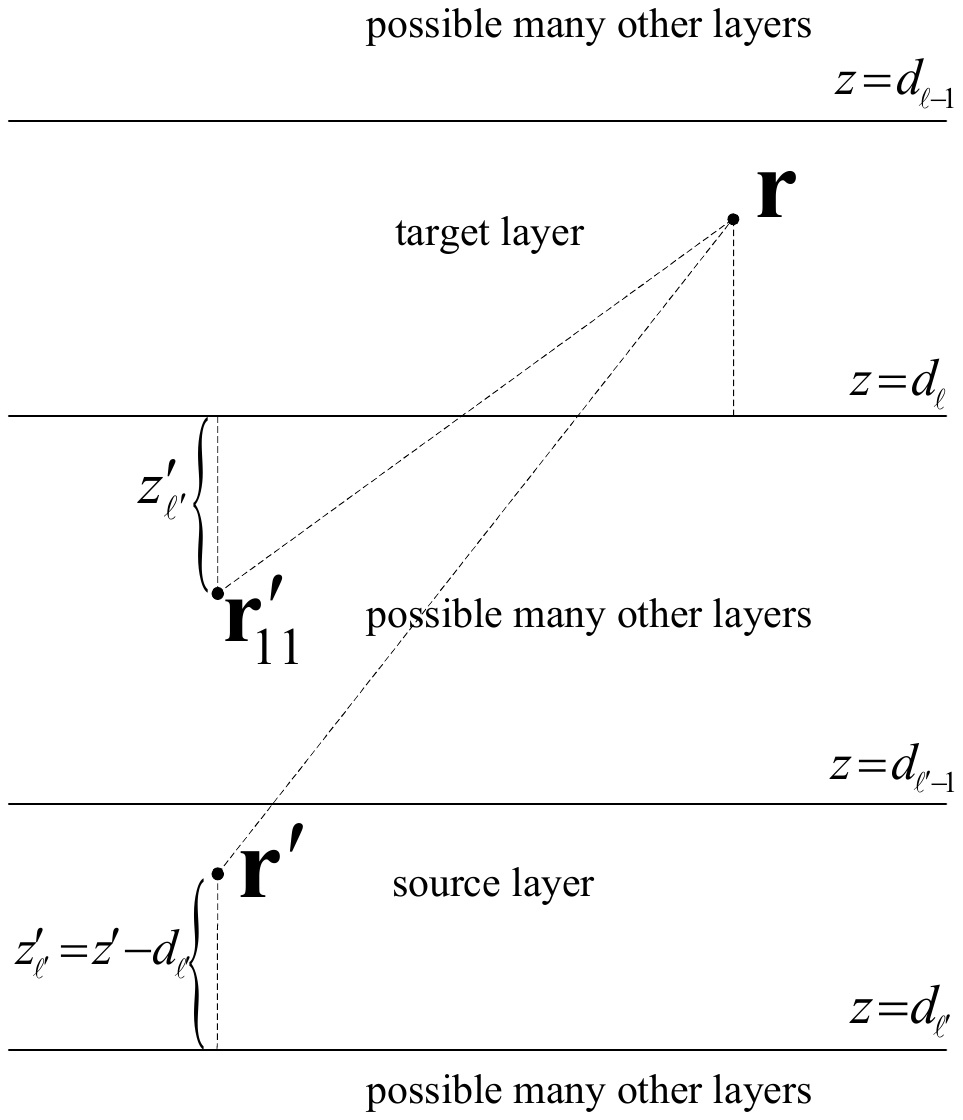}}
	\subfigure[$u_{\ell\ell'}^{12}$]{\includegraphics[scale=0.45]{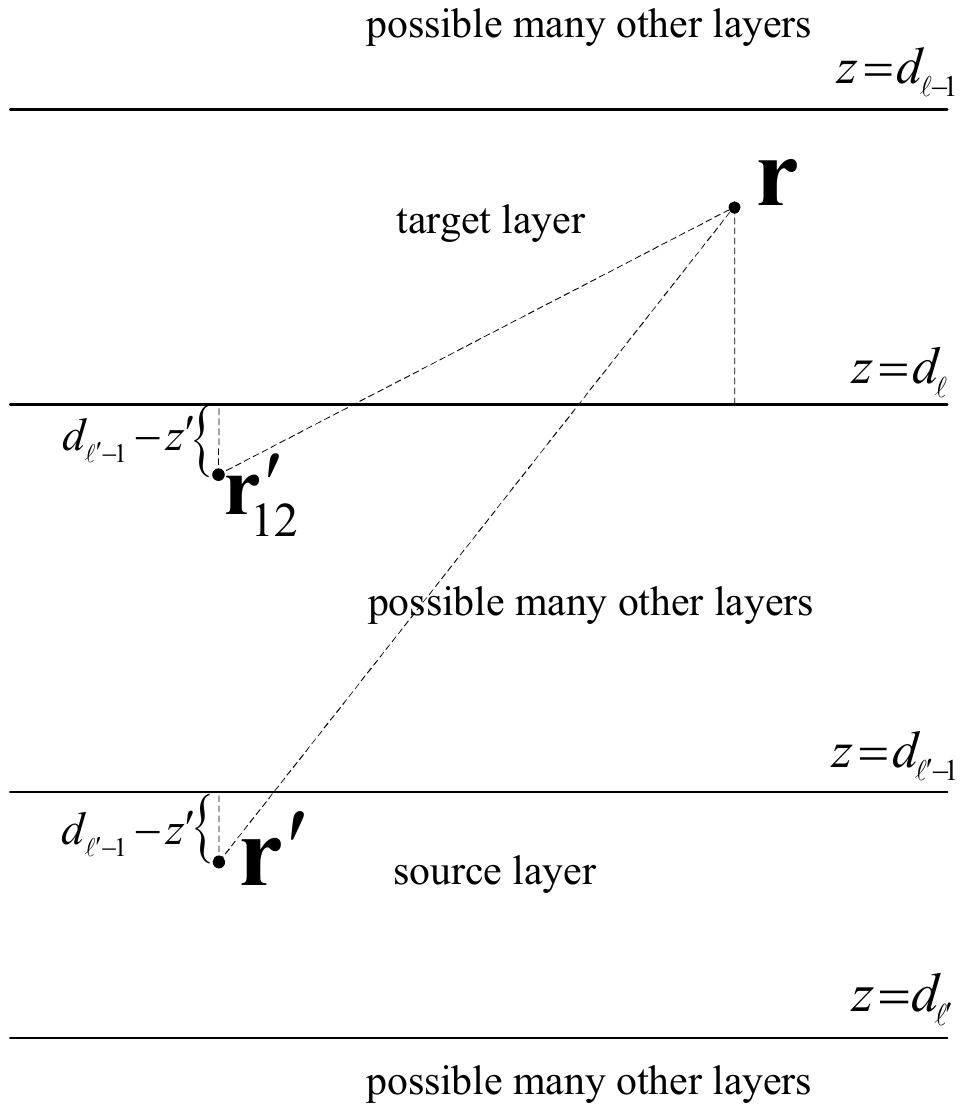}} \\
	
	\subfigure[$u_{\ell\ell'}^{21}$]{\includegraphics[scale=0.45]{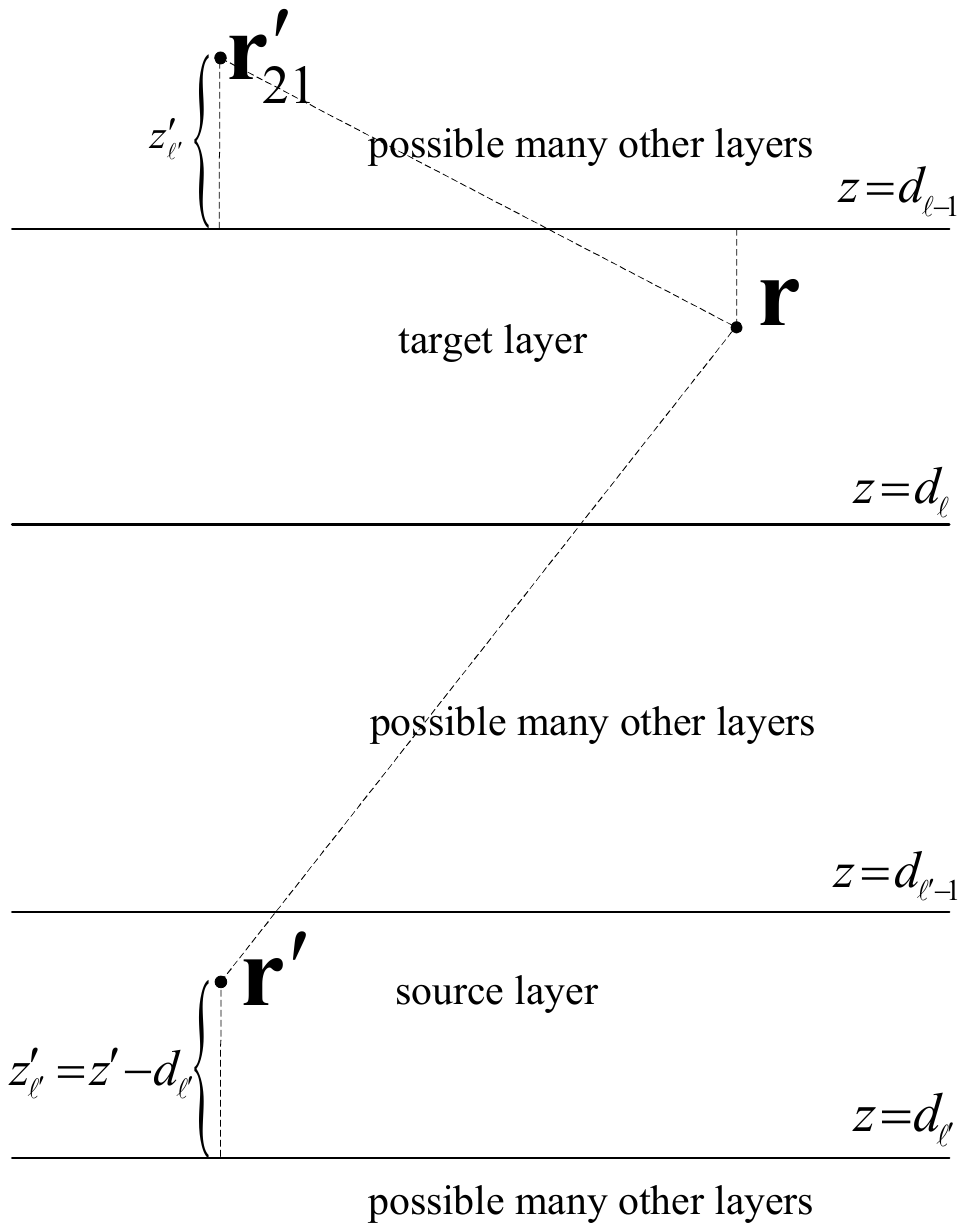}}
	\subfigure[$u_{\ell\ell'}^{22}$]{\includegraphics[scale=0.45]{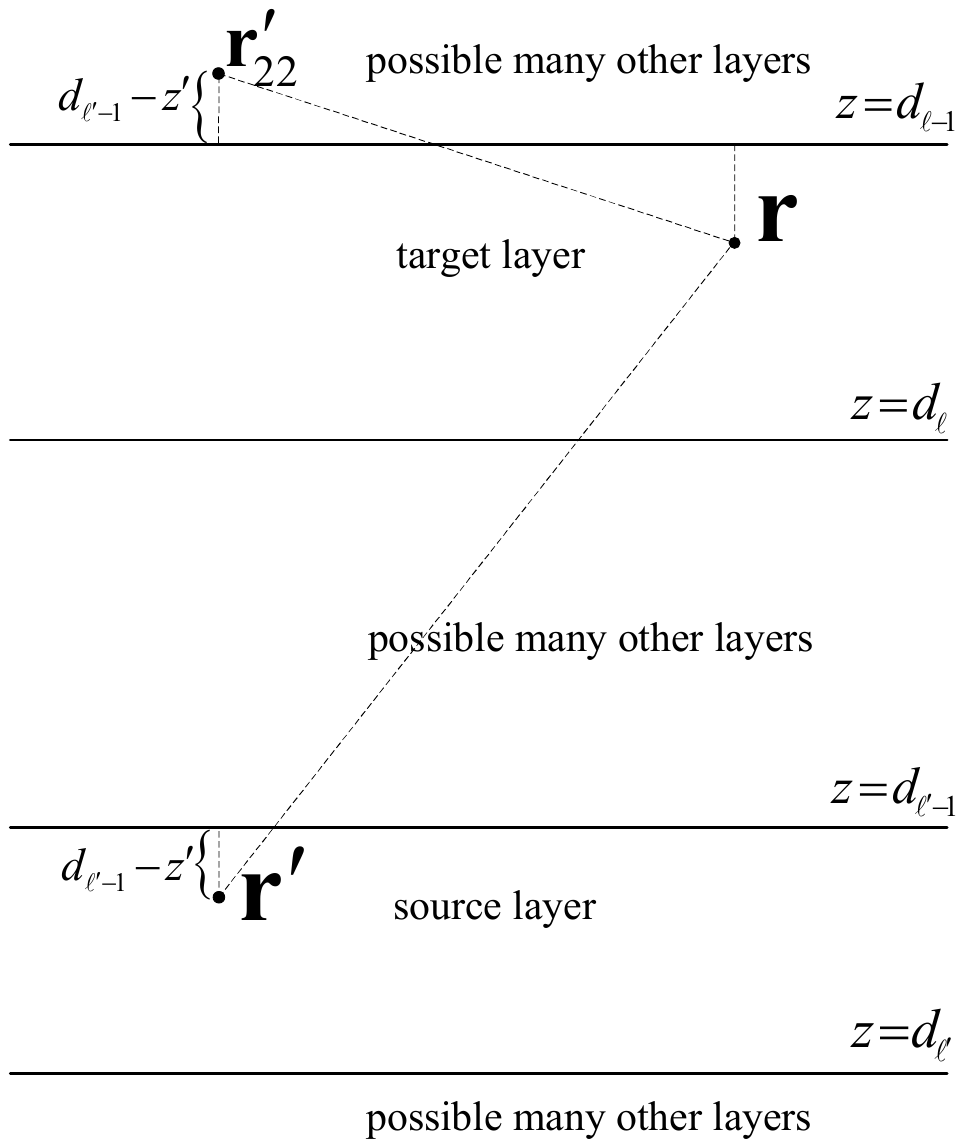}}
	\caption{Location of equivalent polarization sources for the computation of $u_{\ell\ell'}^{\mathfrak{ab}}$.}%
	\label{sourceimages}%
\end{figure}
Recall the expressions \eqref{zexponential}, one can verify that
\begin{equation}\label{expkernelexp}
\mathcal E_{\ell\ell'}^{1\mathfrak b}(\bs r, \bs r')={\mathcal E}^{+}(\bs r, \bs r'_{1\mathfrak b}),\quad \mathcal E_{\ell\ell'}^{2\mathfrak b}(\bs r, \bs r')={\mathcal E}^{-}(\bs r, \bs r'_{2\mathfrak b}).
\end{equation}
Therefore, the reaction components of layered Green's function can be re-expressed using equivalent polarization coordinates as follows
\begin{equation}
u_{\ell\ell'}^{1\mathfrak b}(\bs r, \bs r')=\tilde u_{\ell\ell'}^{1\mathfrak b}(\bs r, \bs r'_{1\mathfrak b}),\quad
u_{\ell\ell'}^{2\mathfrak b}(\bs r, \bs r')=\tilde u_{\ell\ell'}^{2\mathfrak b}(\bs r, \bs r'_{2\mathfrak b}),\quad \mathfrak b=1, 2.
\end{equation}
Substituting into the expression of $\Phi_{\ell\ell^{\prime}}^{\mathfrak{ab}}(\boldsymbol{r}_{\ell i})$ in \eqref{freereactioncomponents}, we obtain
\begin{equation}\label{reactcompusingpolar}
\Phi_{\ell\ell^{\prime}}^{\mathfrak{ab}}(\boldsymbol{r}_{\ell i}):=\sum
\limits_{j=1}^{N_{\ell^{\prime}}}Q_{\ell^{\prime}j}\tilde u_{\ell\ell^{\prime}%
}^{\mathfrak{ab}}(\boldsymbol{r}_{\ell i},\boldsymbol{r}^{\mathfrak ab}_{\ell^{\prime}j}%
),
\end{equation}
where
\begin{equation}\label{equivpolarcoord}
\begin{split}
&\boldsymbol{r}^{11}_{\ell^{\prime}j}=(x_{\ell'j}, y_{\ell'j}, d_{\ell}-(z_{\ell j}-d_{\ell'})), \quad\;\;\;\, \boldsymbol{r}^{12}_{\ell^{\prime}j}=(x_{\ell'j}, y_{\ell'j}, d_{\ell}-(d_{\ell'-1}-z_{\ell j})),\\
&\boldsymbol{r}^{21}_{\ell^{\prime}j}=(x_{\ell'j}, y_{\ell'j}, d_{\ell-1}+(z_{\ell j}-d_{\ell'})), \quad \boldsymbol{r}^{22}_{\ell^{\prime}j}=(x_{\ell'j}, y_{\ell'j}, d_{\ell-1}+(d_{\ell'-1}-z_{\ell j})),
\end{split}
\end{equation}
are equivalent polarization coordinates of $\bs r_{\ell' j}$ for the computation of reaction components in the $\ell$-th layer, see Fig \ref{polarizedsource} for an illustration of $\{\bs r_{\ell'j}^{11}\}_{j=1}^{N_{\ell'}}$ and $\{\bs r_{\ell'j}^{21}\}_{j=1}^{N_{\ell'}}$.
\begin{figure}[ht!]
	\centering
	\includegraphics[scale=0.65]{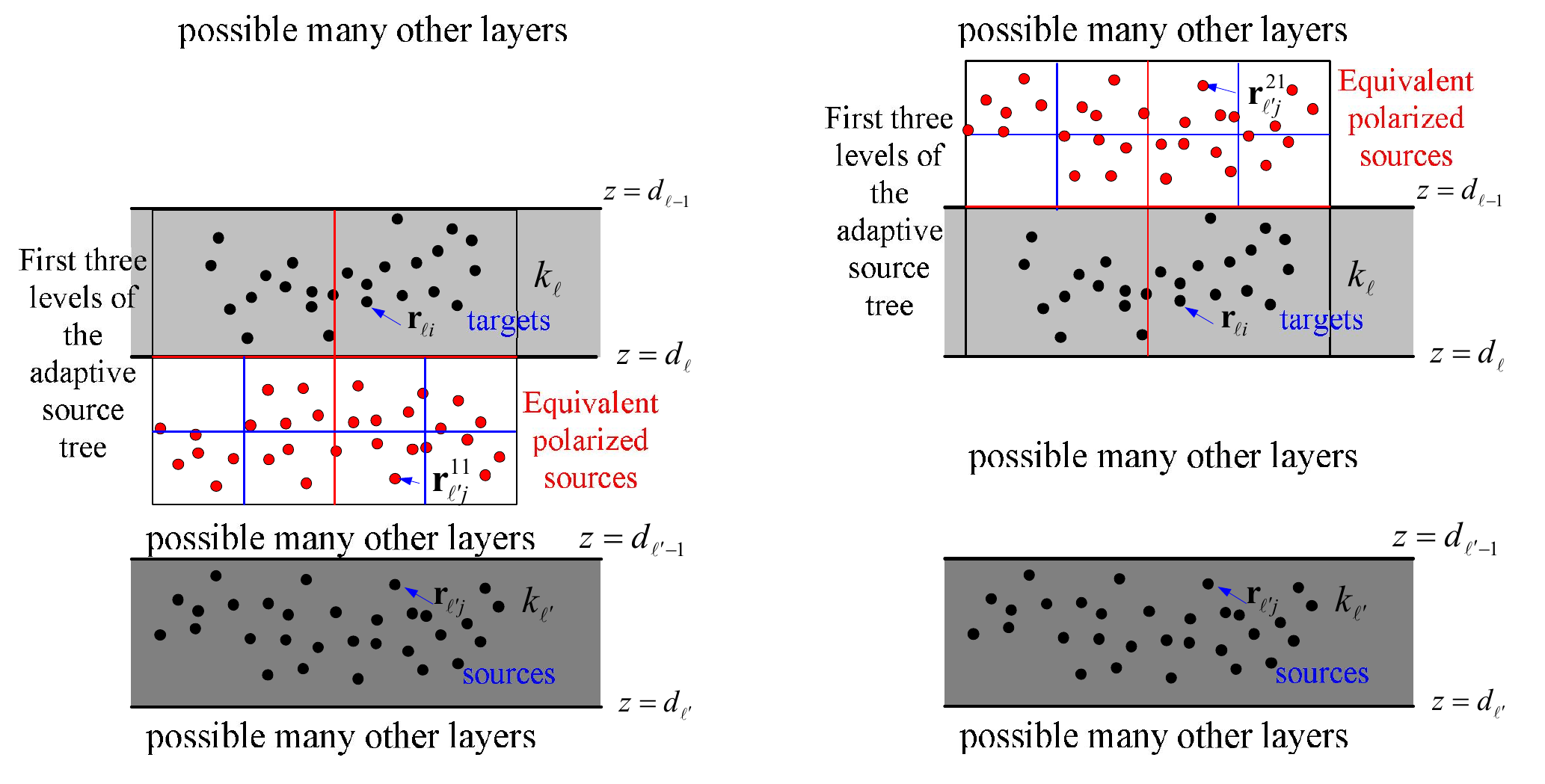}
	\caption{Equivalent polarized sources $\{\bs r_{\ell'j}^{11}\}$, $\{\bs r_{\ell'j}^{21}\}$ and boxes in source tree.}
	\label{polarizedsource}
\end{figure}

By using the expression \eqref{reactcompusingpolar}, the computation of the reaction components can be performed between targets and associated equivalent polarization sources. The definition given by \eqref{equivpolarcoord} shows that the target particles $\{\bs r_{\ell i}\}_{i=1}^{N_{\ell}}$ and the corresponding equivalent polarization sources  are always located on different sides of an interface $z=d_{\ell-1}$ or $z=d_{\ell}$, see Fig. \ref{polarizedsource}. We still emphasize that the introduced equivalent polarization sources are separate with the targets even in considering the reaction components for sources and targets in the same layer, see the numerical examples given in section 3.4. This property implies significant advantage of introducing equivalent polarization sources and using expression \eqref{reactcompusingpolar} in the FMMs for the reaction components $\Phi_{\ell\ell^{\prime}}^{\mathfrak{ab}}(\boldsymbol{r}_{\ell i})$, $\mathfrak a,b=1,2$. More details about this advantage will be discussed in Remark \ref{remark1}. The numerical results presented in Section 4 also validate that the FMMs for reaction components have high efficiency as a direct consequence of the separation of the targets and equivalent polarization sources by interface.

\subsection{Multipole and local expansions and translation operators for a general reaction component}

In the development of FMM for reaction components $\Phi_{\ell\ell'}^{\mathfrak{ab}}(\bs r_{\ell i})$, we will use expression \eqref{reactcompusingpolar} with equivalent polarization coordinates. Therefore, MEs, LEs and corresponding translation operators for $\tilde u_{\ell\ell'}^{\mathfrak{ab}}(\bs r, \bs r'_{\mathfrak{ab}})$ are required. Recall source/target separation in \eqref{positivecase}, similar separations
\begin{equation}\label{sourcetargetseparationsc}
\begin{split}
{\mathcal E}^{+}_{\ell\ell'}(\bs r, \bs r'_{1\mathfrak b})=\mathcal E_{\ell\ell'}^{+}(\bs r, \bs r^{1\mathfrak b}_c)e^{\ri\bs\lambda_{\alpha}\cdot(\bs\rho_c^{1\mathfrak b}-\bs\rho'_{1\mathfrak b})-\lambda_{\ell'z}(z^{1\mathfrak b}_c-z'_{1\mathfrak b})}\\
{\mathcal E}^{-}_{\ell\ell'}(\bs r, \bs r'_{2\mathfrak b})=\mathcal E_{\ell\ell'}^{-}(\bs r, \bs r^{2\mathfrak b}_c)e^{\ri\bs\lambda_{\alpha}\cdot(\bs\rho_c^{2\mathfrak b}-\bs\rho'_{2\mathfrak b})+\lambda_{\ell'z}(z^{2\mathfrak b}_c-z'_{2\mathfrak b})}
\end{split}
\end{equation}
and
\begin{equation}\label{sourcetargetseparationtc}
\begin{split}
\mathcal E_{\ell\ell'}^+(\bs r, \bs r'_{1\mathfrak b})&=\mathcal E_{\ell\ell'}^{+}(\bs r_c^t, \bs r'_{1\mathfrak b})e^{\ri\bs\lambda_{\alpha}\cdot(\bs\rho-\bs\rho_c^t)-\lambda_{\ell z}(z-z_c)},\\
\mathcal E_{\ell\ell'}^-(\bs r, \bs r'_{2\mathfrak b})&=\mathcal E_{\ell\ell'}^{-}(\bs r_c^t, \bs r'_{2\mathfrak b})e^{\ri\bs\lambda_{\alpha}\cdot(\bs\rho-\bs\rho_c^t)+\lambda_{\ell z}(z-z_c)}.
\end{split}
\end{equation}
can be obtained for $\mathfrak b=1, 2$ by inserting the polarization source centers $\bs r^{\mathfrak {ab}}_c=(x_c^{\mathfrak{ab}},y_c^{\mathfrak{ab}}, z_c^{\mathfrak{ab}} )$ and the target center $\bs r_c^t=(x_c^t, y_c^t, z_c^t)$, respectively. Here we also use notations $\bs\rho_c^{\mathfrak{ab}}=(x_c^{\mathfrak{ab}}, y_c^{\mathfrak{ab}})$, $\bs\rho_c^t=(x_c^t, y_c^t)$.
Moreover, Proposition \ref{prop:modiffied-Funk-Hecke} gives spherical harmonic expansions:
\begin{equation}\label{planewavesphexp}
\begin{split}
&e^{\ri\bs\lambda_{\alpha}\cdot(\bs\rho_c^{1\mathfrak b}-\bs\rho'_{1\mathfrak b})-\lambda_{\ell' z}(z^{1\mathfrak b}_c-z'_{1\mathfrak b})}=\sum\limits_{n=0}^{\infty}\sum\limits_{m=-n}^{n} 4\pi(-1)^n i_n(\lambda_{\ell'}r_s^{1\mathfrak b})\overline{Y_n^m(\pi-\theta^{1\mathfrak b}_s,\pi+\varphi^{1\mathfrak b}_s)}\widehat{P}_n^m\Big(\frac{\lambda_{\ell'z}}{\lambda_{\ell'}}\Big)e^{\ri m\alpha},\\
&e^{\ri\bs\lambda_{\alpha}\cdot(\bs\rho_c^{2\mathfrak b}-\bs\rho'_{2\mathfrak b})+\lambda_{\ell'z}(z^{2\mathfrak b}_c-z'_{2\mathfrak b})}=\sum\limits_{n=0}^{\infty}\sum\limits_{m=-n}^{n} 4\pi(-1)^n i_n(\lambda_{\ell'}r_s^{2\mathfrak b})\overline{Y_n^m(\theta^{2\mathfrak b}_s,\pi+\varphi^{2\mathfrak b}_s)}\widehat{P}_n^m\Big(\frac{\lambda_{\ell'z}}{\lambda_{\ell'}}\Big)e^{\ri m\alpha},\\
&e^{\ri\bs\lambda_{\alpha}\cdot(\bs\rho-\bs\rho_c^t)- \lambda_{\ell z}(z-z_c^t)}=\sum\limits_{n=0}^{\infty}\sum\limits_{m=-n}^{n} 4\pi(-1)^n i_n(\lambda_{\ell}r_t)Y_n^m(\theta_t,\varphi_t)\widehat{P}_n^m\Big(\frac{\lambda_{\ell z}}{\lambda_{\ell}}\Big)e^{-\ri m\alpha},\\
&e^{\ri\bs\lambda_{\alpha}\cdot(\bs\rho-\bs\rho_c^t)+ \lambda_{\ell z}(z-z_c^t)}=\sum\limits_{n=0}^{\infty}\sum\limits_{m=-n}^{n} 4\pi(-1)^n i_n(\lambda_{\ell}r_t)Y_n^m(\pi-\theta_t,\varphi_t)\widehat{P}_n^m\Big(\frac{\lambda_{\ell z}}{\lambda_{\ell}}\Big)e^{-\ri m\alpha},
\end{split}
\end{equation}
where $(r_s^{\mathfrak{ab}}, \theta_s^{\mathfrak{ab}}, \varphi_s^{\mathfrak{ab}})$ is the spherical coordinates of $\bs r'_{\mathfrak{ab}}-\bs r_c^{\mathfrak{ab}}$.
Since
$$Y_n^{m}(\pi-\theta,\varphi)=(-1)^{n+m}Y_n^{m}(\theta,\varphi),\quad Y_n^{m}(\theta,\pi+\varphi)=(-1)^{m}Y_n^{m}(\theta,\varphi),$$
the above spherical harmonic expansions together with source/target separation \eqref{sourcetargetseparationsc} and \eqref{sourcetargetseparationtc} implies
\begin{equation}\label{integrandme}
\begin{split}
\mathcal E_{\ell\ell'}^+(\bs r, \bs r'_{1\mathfrak b})&=\mathcal E_{\ell\ell'}^+(\bs r, \bs r_c^{1\mathfrak b})\sum\limits_{n=0}^{\infty}\sum\limits_{m=-n}^{n} 4\pi i_n(\lambda_{\ell'}r_s^{1\mathfrak b})\overline{Y_n^m(\theta^{1\mathfrak b}_s,\varphi^{1\mathfrak b}_s)}\widehat{P}_n^m\Big(\frac{\lambda_{\ell'z}}{\lambda_{\ell'}}\Big)e^{\ri m\alpha},\\
\mathcal E_{\ell\ell'}^-(\bs r, \bs r'_{2\mathfrak b})&=\mathcal E_{\ell\ell'}^-(\bs r, \bs r_c^{2\mathfrak b})\sum\limits_{n=0}^{\infty}\sum\limits_{m=-n}^{n} 4\pi(-1)^{n+m} i_n(\lambda_{\ell'}r_s^{1\mathfrak b})\overline{Y_n^m(\theta^{1\mathfrak b}_s,\varphi^{1\mathfrak b}_s)}\widehat{P}_n^m\Big(\frac{\lambda_{\ell'z}}{\lambda_{\ell'}}\Big)e^{\ri m\alpha}
\end{split}
\end{equation}
and
\begin{equation}\label{integrandle}
\begin{split}
\mathcal E_{\ell\ell'}^+(\bs r, \bs r'_{1\mathfrak b})&=\mathcal E_{\ell\ell'}^+(\bs r_c^t, \bs r'_{1\mathfrak b})\sum\limits_{n=0}^{\infty}\sum\limits_{m=-n}^{n} 4\pi(-1)^n i_n(\lambda_{\ell}r_t)Y_n^m(\theta_t,\varphi_t)\widehat{P}_n^m\Big(\frac{\lambda_{\ell z}}{\lambda_{\ell}}\Big)e^{-\ri m\alpha},\\
\mathcal E_{\ell\ell'}^-(\bs r, \bs r'_{2\mathfrak b})&=\mathcal E_{\ell\ell'}^-(\bs r_c^t, \bs r'_{2\mathfrak b})\sum\limits_{n=0}^{\infty}\sum\limits_{m=-n}^{n} 4\pi(-1)^{m} i_n(\lambda_{\ell}r_t)Y_n^m(\theta_t,\varphi_t)\widehat{P}_n^m\Big(\frac{\lambda_{\ell z}}{\lambda_{\ell}}\Big)e^{-\ri m\alpha}
\end{split}
\end{equation}
for $\mathfrak b=1, 2$. Then, a substitution of \eqref{integrandme} into \eqref{generalcomponentsimag} gives ME
\begin{equation}\label{melayerupgoingimage}
\begin{split}
\tilde u_{\ell\ell'}^{\mathfrak{ab}}(\bs r, \bs r'_{\mathfrak{ab}})=\sum\limits_{n=0}^{\infty}\sum\limits_{m=-n}^{n}  M_{nm}^{\mathfrak{ab}}\widetilde{\mathcal F}_{nm}^{\mathfrak{ab}}(\bs r, \bs r_c^{\mathfrak{ab}}), \quad M_{nm}^{\mathfrak{ab}}=4\pi i_n(\lambda_{\ell'}r_s^{\mathfrak{ab}})\overline{Y_n^{m}(\theta_s^{\mathfrak{ab}},\varphi_s^{\mathfrak{ab}})},
\end{split}
\end{equation}
at equivalent polarization source centers $\bs r_c^{\mathfrak{ab}}$ and LE
\begin{equation}\label{lelayerimage}
\begin{split}
\tilde u_{\ell\ell'}^{\mathfrak{ab}}(\bs r, \bs r'_{\mathfrak{ab}})=\sum\limits_{n=0}^{\infty}\sum\limits_{m=-n}^{n} L_{nm}^{\mathfrak{ab}}i_n(\lambda_{\ell}r_t)Y_n^m(\theta_t,\varphi_t)
\end{split}
\end{equation}
at target center $\bs r_c^t$. Here, $\widetilde{\mathcal F}_{nm}^{\mathfrak{ab}}(\bs r, \bs r_c^{\mathfrak{ab}})$ are represented by Sommerfeld-type integrals
\begin{equation}\label{mebasis}
\begin{split}
\widetilde{\mathcal F}_{nm}^{1\mathfrak b}(\bs r, \bs r_c^{1\mathfrak b})=&\frac{1}{8\pi^2}\int_0^{\infty}\int_0^{2\pi}\frac{\lambda_{\rho}}{\lambda_{\ell z}}{\mathcal E}^{+}_{\ell\ell'}(\bs r, \bs r_c^{1\mathfrak b})\sigma_{\ell\ell'}^{1\mathfrak b}(\lambda_{\rho})\widehat{P}_{n}^m\Big(\frac{\lambda_{\ell'z}}{\lambda_{\ell'}}\Big)e^{\ri m\alpha}d\alpha d\lambda_{\rho},\\
\widetilde{\mathcal F}_{nm}^{2\mathfrak b}(\bs r, \bs r_c^{2\mathfrak b})=&\frac{(-1)^{n+m}}{8\pi^2}\int_0^{\infty}\int_0^{2\pi}\frac{\lambda_{\rho}}{\lambda_{\ell z}}{\mathcal E}^{-}_{\ell\ell'}(\bs r, \bs r_c^{2\mathfrak b})\sigma_{\ell\ell'}^{2\mathfrak b}(\lambda_{\rho})\widehat{P}_{n}^m\Big(\frac{\lambda_{\ell'z}}{\lambda_{\ell'}}\Big)e^{\ri m\alpha}d\alpha d\lambda_{\rho},
\end{split}
\end{equation}
and the local expansion coefficients are given by
\begin{equation}\label{lecoeffimage}
\begin{split}
L_{nm}^{1\mathfrak b}=&\frac{(-1)^n}{2\pi }\int_0^{\infty}\int_0^{2\pi}\frac{\lambda_{\rho}}{\lambda_{\ell z}}{\mathcal E}^{+}_{\ell\ell'}(\bs r_c^t, \bs r'_{1\mathfrak b})\sigma_{\ell\ell^{\prime}}^{1\mathfrak b}(\lambda_{\rho})\widehat{P}_{n}^m\Big(\frac{\lambda_{\ell z}}{\lambda_{\ell}}\Big)e^{-\ri m\alpha}d\alpha d\lambda_{\rho},\\
L_{nm}^{2\mathfrak b}=&\frac{(-1)^{m}}{2\pi}\int_0^{\infty}\int_0^{2\pi}\frac{\lambda_{\rho}}{\lambda_{\ell z}}{\mathcal E}^{-}_{\ell\ell'}(\bs r_c^t, \bs r'_{2\mathfrak b})\sigma_{\ell\ell^{\prime}}^{2\mathfrak b}(\lambda_{\rho})\widehat{P}_{n}^m\Big(\frac{\lambda_{\ell z}}{\lambda_{\ell}}\Big)e^{-\ri m\alpha}d\alpha d\lambda_{\rho}.
\end{split}
\end{equation}
According to the definition of $\mathcal E_{\ell\ell'}^-(\bs r, \bs r')$ and $\mathcal E_{\ell\ell'}^+(\bs r, \bs r')$  in \eqref{expkernelexp},  the centers $\bs r_c^t$ and $\bs r_c^{\mathfrak{ab}}$ have to satisfy
\begin{equation}\label{imagecentercond}
z_c^{1\mathfrak b}<d_{\ell}, \quad z_c^{2\mathfrak b}>d_{\ell-1},\quad z_c^t>d_{\ell} \;\; {\rm for}\;\; \tilde u_{\ell\ell'}^{1\mathfrak b}(\bs r, \bs r'_{1\mathfrak b}); \quad z_c^t<d_{\ell-1} \;\; {\rm for}\;\; \tilde u_{\ell\ell'}^{2\mathfrak b}(\bs r, \bs r'_{2\mathfrak b}).
\end{equation}
to ensure the exponential decay in $\mathcal E_{\ell\ell'}^+(\bs r, \bs r^{1\mathfrak b}_c), \mathcal E_{\ell\ell'}^-(\bs r, \bs r^{2\mathfrak b}_c)$ and $\mathcal E_{\ell\ell'}^{+}(\bs r_c^t, \bs r'_{1\mathfrak b}), \mathcal E_{\ell\ell'}^{-}(\bs r_c^t, \bs r'_{2\mathfrak b})$ as $\lambda_{\rho}\rightarrow\infty$ and hence the convergence of the corresponding Sommerfeld-type integrals in \eqref{mebasis} and \eqref{lecoeffimage}. These restriction can be met in practice, since we are considering targets in the $\ell$-th layer and the equivalent polarization coordinates are always locate up the interface $z=d_{\ell-1}$ or below the interface $z=d_{\ell}$.


Next, we discuss the center shifting and translation for ME \eqref{melayerupgoingimage} and LE \eqref{lelayerimage}. A desirable feature of the expansions of reaction components discussed above is that the formula \eqref{melayerupgoingimage} for the ME coefficients and the formula \eqref{lelayerimage} for the LE have exactly the same form as the formulas of ME coefficients and LE for free space Green's function. Therefore, we can see that center shifting for MEs and LEs are exactly the same as free space case given in \eqref{metome}.

We only need to derive the translation operator from ME \eqref{melayerupgoingimage} to LE \eqref{lelayerimage}. Recall the definition of exponential functions in \eqref{mekernelimage}, ${\mathcal E}^{+}_{\ell\ell'}(\bs r, \bs r_c^{1\mathfrak b})$ and ${\mathcal E}^{-}_{\ell\ell'}(\bs r, \bs r_c^{2\mathfrak b})$ have the following splitting
\begin{equation*}
\begin{split}
{\mathcal E}^{+}_{\ell\ell'}(\bs r, \bs r_c^{1\mathfrak{b}})&={\mathcal E}^{+}_{\ell\ell'}(\bs r_c^t, \bs r_c^{1\mathfrak b})e^{\ri\bs\lambda_{\alpha}\cdot(\bs\rho-\bs\rho_c^t)}e^{-\lambda_{\ell z} (z-z_c^t)},\\
{\mathcal E}^{-}_{\ell\ell'}(\bs r, \bs r_c^{2\mathfrak b})&={\mathcal E}^{-}_{\ell\ell'}(\bs r_c^t, \bs r_c^{2\mathfrak b})e^{\ri\bs\lambda_{\alpha}\cdot(\bs\rho-\bs\rho_c^t)}e^{\lambda_{\ell z} (z-z_c^t)}.
\end{split}
\end{equation*}
Applying spherical harmonic expansion \eqref{extmodifiedfunkhecke} again, we obtain
\begin{equation*}
\begin{split}
e^{\ri\bs\lambda_{\alpha}\cdot(\bs\rho-\bs\rho_c^t)+ \lambda_{\ell z} (z-z_c^t)}=4\pi\sum\limits_{n=0}^{\infty}\sum\limits_{m=-n}^{n} (- 1)^{m} i_n(r_t)Y_n^m(\theta_t,\varphi_t)\widehat{P}_n^m\Big(\frac{\lambda_{\ell z}}{\lambda_{\ell}}\Big)e^{-\ri m\alpha},\\
e^{\ri\bs\lambda_{\alpha}\cdot(\bs\rho-\bs\rho_c^t)-\lambda_{\ell z} (z-z_c^t)}=4\pi\sum\limits_{n=0}^{\infty}\sum\limits_{m=-n}^{n} (- 1)^{n} i_n(r_t)Y_n^m(\theta_t,\varphi_t)\widehat{P}_n^m\Big(\frac{\lambda_{\ell z}}{\lambda_{\ell}}\Big)e^{-\ri m\alpha}.
\end{split}
\end{equation*}
Substituting into \eqref{melayerupgoingimage}, the multipole expansion is translated to local expansion \eqref{lelayerimage} via
\begin{equation}\label{metoleimage1}
L_{nm}^{1\mathfrak{b}}=\sum\limits_{\nu=0}^{\infty}\sum\limits_{|\mu|=0}^{\nu}T_{nm,\nu\mu}^{1\mathfrak{b}}M_{\nu\mu}^{1\mathfrak{b}},\quad L_{nm}^{2\mathfrak{b}}=\sum\limits_{\nu=0}^{\infty}\sum\limits_{|\mu|=0}^{\nu}T_{nm,\nu\mu}^{2\mathfrak{b}}M_{\nu\mu}^{2\mathfrak{b}},
\end{equation}
and the multipole-to-local translation operators are given in integral forms as follows
\begin{equation}\label{metoleimage2}
\begin{split}
T_{nm,\nu\mu}^{1\mathfrak{b}}=&\frac{(-1)^{n}}{2\pi}\int_0^{\infty}\int_0^{2\pi}\frac{\lambda_{\rho}}{\lambda_{\ell z}}{\mathcal E}^{+}(\bs r_c^t, \bs r_c^{1\mathfrak{b}})\sigma_{\ell\ell'}^{1\mathfrak{b}}(\lambda_{\rho})Q_{nm}^{\nu\mu}(\lambda_{\rho})e^{\ri (\mu-m)\alpha}d\alpha d\lambda_{\rho},\\
T_{nm,\nu\mu}^{2\mathfrak{b}}=&\frac{(-1)^{m+\nu+\mu}}{2\pi}\int_0^{\infty}\int_0^{2\pi}\frac{\lambda_{\rho}}{\lambda_{\ell z}}{\mathcal E}^{-}(\bs r_c^t, \bs r_c^{2\mathfrak{b}})\sigma_{\ell\ell'}^{2\mathfrak{b}}(\lambda_{\rho})Q_{nm}^{\nu\mu}(\lambda_{\rho})e^{\ri (\mu-m)\alpha}d\alpha d\lambda_{\rho},
\end{split}
\end{equation}
where
$$Q_{nm}^{\nu\mu}(\lambda_{\rho})=\widehat{P}_n^m\Big(\frac{\lambda_{\ell z}}{\lambda_{\ell}}\Big)\widehat{P}_{\nu}^{\mu}\Big(\frac{\lambda_{\ell' z}}{\lambda_{\ell'}}\Big).$$
Again the convergence of the Sommerfeld-type integrals in \eqref{metoleimage2} is ensured by the conditions in \eqref{imagecentercond}.

At the end of this subsection, we give some numerical examples to show the convergence behavior of the multipole expansions in \eqref{melayerupgoingimage}. Consider the multipole expansions of $\tilde u_{11}^{11}(\bs r, \bs r'_{11})$ and $\tilde u_{11}^{22}(\bs r, \bs r'_{22})$ in a three-layer media with $\varepsilon_0=1.0$, $\varepsilon_1=8.6$, $\varepsilon_2=20.5$, $\lambda_0=1.2$, $\lambda_1=0.5$, $\lambda_2=2.1$, $d_0=0$, $d_1=-1.2$. In all the following examples, we fix $\bs r'=(0.625, 0.5, -0.1)$ in the middle layer and use definition \eqref{eqpolarizedsource} to determine $\bs r_{11}^{\prime}=(0.625, 0.5, -2.3)$, $\bs r_{22}^{\prime}=(0.625, 0.5, 0.1)$. The centers for multipole expansions are set to be  $\bs r_c^{11}=(0.6, 0.6, -2.4)$, $\bs r_c^{22}=(0.6, 0.6, 0.2)$ which implies $|\bs r_{11}'-\bs r_c^{11}|=|\bs r_{22}'-\bs r_c^{22}|\approx0.1436$. For both components, we shall test three targets given as follows
\begin{equation*}
\bs r_1=(0.5, 0.625, -0.1),\quad \bs r_2= (0.5, 0.625, -0.6), \quad \bs r_3=(0.5, 0.625, -1.1);
\end{equation*}
The relative errors against truncation number $p$ are depicted in Fig. \ref{meconvergence}. We also plot the convergence rates similar with that of the multipole expansion of free space Green's function, i.e., $O\Big[\Big(\frac{|\bs r-\bs r_c^{\mathfrak{ab}}|}{|\bs r_{\mathfrak{ab}}'-\bs r_c^{\mathfrak{ab}}|}\Big)^{p+1}\Big]$ as reference convergence rates. The results clearly show that ME of reaction components $u_{11}^{11}(\bs r, \bs r'_{11})$ and $u_{11}^{22}(\bs r, \bs r'_{22})$have spectral convergence rate $O\Big[\Big(\frac{|\bs r-\bs r_c^{\mathfrak{ab}}|}{|\bs r_{\mathfrak{ab}}'-\bs r_c^{\mathfrak{ab}}|}\Big)^{p+1}\Big]$ similar as that of free space Green's function. Therefore, the ME \eqref{melayerupgoingimage} can be used to develop FMM for efficient computation of the reaction components as in the development of Yukawa-FMM for free space Green's function.
\begin{figure}[ptbh]
	\center
	\subfigure[$\tilde u_{11}^{11}(\bs r, \bs r'_{11})$]{\includegraphics[scale=0.40]{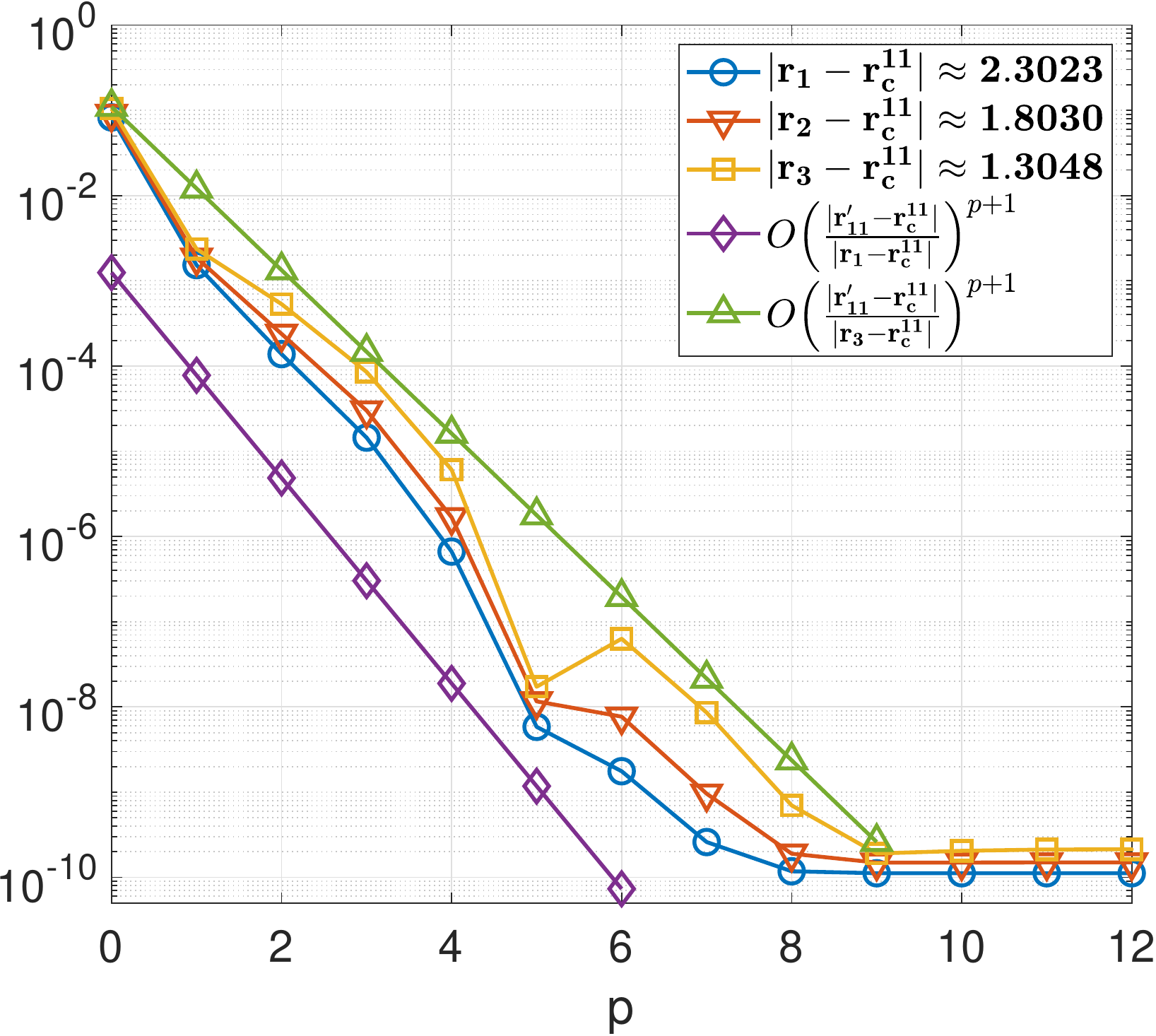}}\qquad
	\subfigure[$\tilde u_{11}^{22}(\bs r, \bs r'_{22})$]{\includegraphics[scale=0.40]{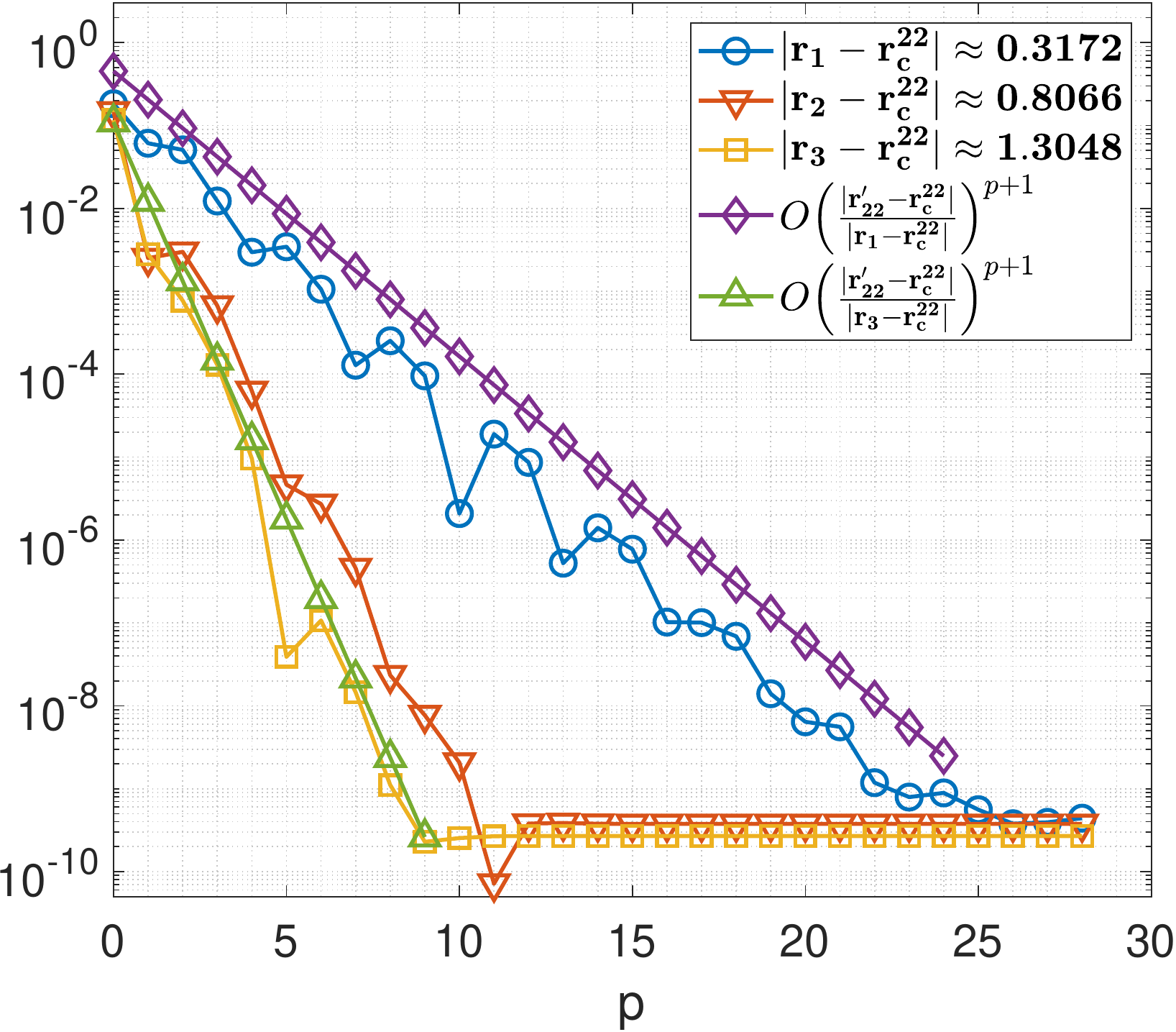}}
	\caption{Spectral convergence of the multipole expansions for reaction components.}%
	\label{meconvergence}%
\end{figure}


\subsection{FMM algorithm and efficient calculation of Sommerfeld-type integrals}

The framework of the traditional FMM together with ME \eqref{melayerupgoingimage}, LE \eqref{lelayerimage}, M2L translation \eqref{metoleimage1}-\eqref{metoleimage2} and free space ME and LE center shifting \eqref{metome} constitute the FMM for the computation of reaction components $\Phi_{\ell\ell'}^{\mathfrak{ab}}(\bs r_{\ell i})$, $\mathfrak a, \mathfrak b=1, 2$. In the FMM for each reaction component, a large box is defined to include all equivalent polarization coordinates and corresponding target particles where the adaptive tree structure will be built by a bisection procedure, see. Fig. \ref{polarizedsource} (right). { Note that the validity of the ME \eqref{melayerupgoingimage}, LE \eqref{lelayerimage} and M2L translation \eqref{metoleimage1} used in the algorithm imposes restrictions \eqref{imagecentercond} on the centers, accordingly. This can be ensured by setting the largest box for the specific reaction component to be equally divided by the interface between equivalent polarization coordinates and targets, see. Fig. \ref{polarizedsource}. Thus, the largest box for the FMM implementation will be different for different reaction components.  With this setting, all source and target boxes of level higher than zeroth level in the adaptive tree structure will have centers below or above the interfaces, accordingly. The fast multipole algorithm for the computation of the reaction component $\Phi_{\ell\ell'}^{\mathfrak{ab}}(\bs r_{\ell i})$ is summarized in Algorithm 1.
All the interactions given by \eqref{totalinteraction} will be obtained by first calculating all components and then summing them up. The algorithm is presented in Algorithm 2.
	
The double integrals involved in the ME, LE and M2L translations can be simplified by using the following identity
\begin{equation}
J_n(z)=\frac{1}{2\pi \ri^n}\int_0^{2\pi}e^{\ri z\cos\theta+\ri n\theta}d\theta.
\end{equation}
Define
\begin{equation}\label{zexpkernel}
\mathcal Z_{\ell\ell'}^+(z, z'):=e^{-\lambda_{\ell z}(z-d_{\ell})-\lambda_{\ell'z}(d_{\ell}-z^{\prime})},\quad \mathcal Z_{\ell\ell'}^-(z, z'):=e^{-\lambda_{\ell z}(d_{\ell-1}-z)-\lambda_{\ell'z}(z^{\prime}-d_{\ell-1})}.
\end{equation}
Then, the multipole expansion functions in \eqref{mebasis} can be simplified as
\begin{equation*}
\begin{split}
\widetilde{\mathcal F}_{nm}^{1\mathfrak b}(\bs r, \bs r_c^{1\mathfrak b})=&\frac{e^{\ri m\varphi_s^{1\mathfrak b}}}{4\pi}\int_0^{\infty}\lambda_{\rho}J_m(\lambda_{\rho}\rho_s^{1\mathfrak b})\frac{\mathcal Z_{\ell\ell'}^+(z, z_c^{1\mathfrak b})}{\lambda_{\ell z}}\sigma_{\ell\ell'}^{1\mathfrak b}(\lambda_{\rho})\ri^m\widehat{P}_{n}^m\Big(\frac{\lambda_{\ell'z}}{\lambda_{\ell'}}\Big)d\lambda_{\rho},\\
\widetilde{\mathcal F}_{nm}^{2\mathfrak b}(\bs r, \bs r_c^{2\mathfrak b})=&\frac{(-1)^{n+m}e^{\ri m\varphi_s^{2\mathfrak b}}}{4\pi}\int_0^{\infty}\lambda_{\rho}J_m(\lambda_{\rho}\rho_s^{2\mathfrak b})\frac{\mathcal Z_{\ell\ell'}^-(z, z_c^{2\mathfrak b})}{\lambda_{\ell z}}\sigma_{\ell\ell'}^{2\mathfrak b}(\lambda_{\rho})\ri^m\widehat{P}_{n}^m\Big(\frac{\lambda_{\ell'z}}{\lambda_{\ell'}}\Big)d\lambda_{\rho},
\end{split}
\end{equation*}
\begin{algorithm}\label{algorithm1}
	\caption{FMM for general reaction component $\Phi_{\ell\ell'}^{\mathfrak{ab}}(\bs r_{\ell i}), i=1, 2, \cdots, N_{\ell}$}
	\begin{algorithmic}
		\State Determine $z$-coordinates of equivalent polarization sources for all source particles.
		\State Generate an adaptive hierarchical tree structure with polarization sources $\{Q_{\ell'j}, \bs r_{\ell'j}^{11}\}_{j=1}^{N_{\ell'}}$ and targets $\{\bs r_{\ell i}\}_{i=1}^{N_{\ell}}$.
		\State{\bf Upward pass:}
		\For{$l=H \to 0$}
		\For{all boxes $j$ on source tree level $l$ }
		\If{$j$ is a leaf node}
		\State{form the free-space ME using Eq. \eqref{melayerupgoingimage}.}
		\Else
		\State form the free-space ME by merging children's expansions using the free-space center shift translation operator \eqref{metome}.
		\EndIf
		\EndFor
		\EndFor
		\State{\bf Downward pass:}
		\For{$l=1 \to H$}
		\For{all boxes $j$ on target tree level $l$ }
		\State shift the LE of $j$'s parent to $j$ itself using the free-space shifting \eqref{metome}.
		\State collect interaction list contribution using the source box to target box translation operator in Eq. \eqref{metoleimage1} while $T_{nm,\nu\mu}^{\mathfrak{ab}}$ are computed using \eqref{coefintable}, \eqref{metoletableform} and forward recursion \eqref{recurrence1} for $\mathcal S_{nm,ij}^{\mathfrak{ab}}$.
		\EndFor
		\EndFor
		\State {\bf Evaluate Local Expansions:}
		\For{each leaf node (childless box)}
		\State evaluate the local expansion at each particle location.
		\EndFor
		\State {\bf Local Direct Interactions:}
		\For{$i=1 \to N$ }
		\State compute Eq. \eqref{reactcompusingpolar} of target particle $i$ in the neighboring boxes using pre-computed tables of $I_{00}^{\mathfrak{ab}}(\rho, z, z')$.
		\EndFor
	\end{algorithmic}
\end{algorithm}
and the expression \eqref{lecoeffimage} for local expansion coefficients can be simplified as
\begin{equation*}
\begin{split}
L_{nm}^{1\mathfrak b}=&(-1)^ne^{-\ri m\varphi_t^{1\mathfrak{b}}}\int_0^{\infty}\lambda_{\rho}J_{-m}(\lambda_{\rho}\rho_t^{1\mathfrak{b}})\frac{\mathcal Z_{\ell\ell'}^+(z_c^t, z_{1\mathfrak b}')}{\lambda_{\ell z}}\sigma_{\ell\ell^{\prime}}^{1\mathfrak b}(\lambda_{\rho})\ri^{-m}\widehat{P}_{n}^m\Big(\frac{\lambda_{\ell z}}{\lambda_{\ell}}\Big) d\lambda_{\rho},\\
L_{nm}^{2\mathfrak b}=&(-1)^{m}e^{-\ri m\varphi_t^{2\mathfrak{b}}}\int_0^{\infty}\lambda_{\rho}J_{-m}(\lambda_{\rho}\rho_t^{2\mathfrak{b}})\frac{\mathcal Z_{\ell\ell'}^-(z_c^t, z_{2\mathfrak b}')}{\lambda_{\ell z}}\sigma_{\ell\ell^{\prime}}^{2\mathfrak b}(\lambda_{\rho})\ri^{-m}\widehat{P}_{n}^m\Big(\frac{\lambda_{\ell z}}{\lambda_{\ell}}\Big) d\lambda_{\rho},
\end{split}
\end{equation*}
for $\mathfrak b=1, 2$, where $(\rho_s^{\mathfrak{ab}}, \varphi_s^{\mathfrak{ab}})$ and $(\rho_t^{\mathfrak{ab}}, \varphi_t^{\mathfrak{ab}})$ are polar coordinates of $\bs r-\bs r_c^{\mathfrak{ab}}$ and $\bs r_c^t-\bs r'_{\mathfrak{ab}}$ projected in $xy$ plane. Moreover, the multipole to local translation \eqref{metoleimage2} can be simplified as
\begin{equation}\label{me2lesimplified}
\begin{split}
T_{nm,\nu\mu}^{1\mathfrak{b}}=&(-1)^{n}D_{m\mu}^{(1)}(\varphi_{st}^{1\mathfrak b})\int_0^{\infty}\lambda_{\rho}J_{\mu-m}(\lambda_{\rho}\rho_{st}^{1\mathfrak b})\frac{\mathcal Z_{\ell\ell'}^+(z_c^t,z_c^{1\mathfrak b})}{\lambda_{\ell z}}Q_{nm}^{\nu\mu}(\lambda_{\rho})\sigma_{\ell\ell'}^{1\mathfrak{b}}(\lambda_{\rho})d\lambda_{\rho},\\
T_{nm,\nu\mu}^{2\mathfrak{b}}=&(-1)^{\nu}D_{m\mu}^{(2)}(\varphi_{st}^{2\mathfrak b})\int_0^{\infty}\lambda_{\rho}J_{\mu-m}(\lambda_{\rho}\rho_{st}^{2\mathfrak b})\frac{\mathcal Z_{\ell\ell'}^-(z_c^t,z_c^{2\mathfrak b})}{\lambda_{\ell z}}Q_{nm}^{\nu\mu}(\lambda_{\rho})\sigma_{\ell\ell'}^{2\mathfrak{b}}(\lambda_{\rho})d\lambda_{\rho},
\end{split}
\end{equation}
where $(\rho_{st}^{\mathfrak{ab}}, {\phi}_{st}^{\mathfrak{ab}})$ is the polar coordinates of $\bs r_c-\bs r_c^{\mathfrak{ab}}$ projected in $xy$ plane,
$$D_{m\mu}^{(1)}(\varphi)=\ri^{\mu-m}e^{\ri(\mu-m){\varphi}},\quad D_{m\mu}^{(2)}(\varphi)=(-1)^{m+\mu}\ri^{\mu-m}e^{\ri(\mu-m){\varphi}}.$$
\begin{algorithm}\label{algorithm2}
	\caption{3-D FMM for \eqref{totalinteraction}}
	\begin{algorithmic}
		\For{$\ell=0 \to L$}
		\State{use free space FMM to compute $\Phi_{\ell}^{free}(\bs r_{\ell i})$, $i=1, 2, \cdots, N_{\ell}$.}
		\EndFor
		\For{$\ell=0 \to L-1$}
		\For{$\ell'=0 \to L-1$ }
		\State use {\bf Algorithm 1} to compute $\Phi_{\ell\ell'}^{11}(\bs r_{\ell i})$, $i=1, 2, \cdots, N_{\ell}$.
		\EndFor
		\For{$\ell'=1 \to L$ }
		\State use {\bf Algorithm 1} to compute $\Phi_{\ell\ell'}^{12}(\bs r_{\ell i})$, $i=1, 2, \cdots, N_{\ell}$.
		\EndFor
		\EndFor
		\For{$\ell=1 \to L$}
		\For{$\ell'=0 \to L-1$ }
		\State use {\bf Algorithm 1} to compute $\Phi_{\ell\ell'}^{21}(\bs r_{\ell i})$, $i=1, 2, \cdots, N_{\ell}$.
		\EndFor
		\For{$\ell'=1 \to L$ }
		\State use {\bf Algorithm 1} to compute $\Phi_{\ell\ell'}^{22}(\bs r_{\ell i})$, $i=1, 2, \cdots, N_{\ell}$.
		\EndFor
		\EndFor
	\end{algorithmic}
\end{algorithm}
Define integral
\begin{equation}\label{uniformintegral}
\begin{split}
I_{nm,\nu\mu}^{1\mathfrak{b}}(\rho, z, z')=\int_0^{\infty}\lambda_{\rho}J_{\mu-m}(\lambda_{\rho}\rho)\frac{{\mathcal{Z}}_{\ell\ell'}^{+}(z, z')\sigma_{\ell\ell'}^{1\mathfrak b}(\lambda_{\rho})}{\lambda_{\ell z}}\ri^{\mu-m}Q_{nm}^{\nu\mu}(\lambda_{\rho}) d\lambda_{\rho},\\
I_{nm,\nu\mu}^{2\mathfrak{b}}(\rho, z, z')=\int_0^{\infty}\lambda_{\rho}J_{\mu-m}(\lambda_{\rho}\rho)\frac{{\mathcal{Z}}_{\ell\ell'}^{-}(z, z')\sigma_{\ell\ell'}^{2\mathfrak b}(\lambda_{\rho})}{\lambda_{\ell z}}\ri^{\mu-m}Q_{nm}^{\nu\mu}(\lambda_{\rho}) d\lambda_{\rho},
\end{split}
\end{equation}
Then
\begin{equation}\label{coefintable}
\begin{split}
&\widetilde{\mathcal F}_{nm}^{1\mathfrak b}(\bs r, \bs r_c^{1\mathfrak b})=\frac{e^{\ri m\varphi_s^{1\mathfrak b}}}{\sqrt{4\pi}}I_{00,nm}^{1\mathfrak{b}}(\rho_s^{1\mathfrak b}, z, z_c^{1\mathfrak b}),\\
& \widetilde{\mathcal F}_{nm}^{2\mathfrak b}(\bs r, \bs r_c^{2\mathfrak b})=\frac{(-1)^{n+m}e^{\ri m\varphi_s^{2\mathfrak b}}}{\sqrt{4\pi}}I_{00,nm}^{2\mathfrak{b}}(\rho_s^{2\mathfrak b}, z, z_c^{2\mathfrak b}),\\
&L_{nm}^{1\mathfrak b}=(-1)^n\sqrt{4\pi}e^{-\ri m\varphi_t^{1\mathfrak{b}}}I_{nm,00}^{1\mathfrak{b}}(\rho_t^{1\mathfrak b}, z_c^t, z'_{1\mathfrak b}),\\
& L_{nm}^{2\mathfrak b}=(-1)^{m}\sqrt{4\pi}e^{-\ri m\varphi_t^{2\mathfrak{b}}}I_{nm,00}^{2\mathfrak{b}}(\rho_t^{2\mathfrak b}, z'_{2\mathfrak b}, z_c^t),\\
& T_{nm,\nu\mu}^{1\mathfrak b}=(-1)^{n}e^{\ri(\mu-m)\varphi_{st}^{1\mathfrak b}}I_{nm,\nu\mu}^{1\mathfrak{b}}(\rho_{st}^{1\mathfrak b}, z_c^t, z_c^{1\mathfrak b}),\\
& T_{nm,\nu\mu}^{2\mathfrak b}=(-1)^{\nu+m+\mu}e^{\ri(\mu-m)\varphi_{st}^{2\mathfrak b}}I_{nm,\nu\mu}^{2\mathfrak{b}}(\rho_{st}^{2\mathfrak b}, z_c^t, z_c^{2\mathfrak b}).
\end{split}
\end{equation}

The FMM demands efficient computation of Sommerfeld-type integrals $I_{nm,\nu\mu}^{\mathfrak{ab}}$ defined in \eqref{uniformintegral}. It
is clear that they have oscillatory integrands. These integrals are convergent when the target and
source particles are not exactly on the interfaces of a layered medium.
High order quadrature rules could be used for direct
numerical computation at runtime. However, this becomes prohibitively expensive due to a
large number of integrals needed in the FMM. In fact, $O(p^{4})$ integrals
will be required for each source box to target box translation. Moreover, the
involved integrand decays more slowly as the order of the involved associated Legendre function increases.

The Sommerfeld-type integrals $I_{nm,\nu\mu}^{\mathfrak{ab}}$ involves $Q_{nm}^{\nu\mu}(\lambda_{\rho})$ the product of two associated Legendre functions. This term can be simplified by representing its polynomial part into Legendre polynomials. Define
\begin{equation}
c_{nm}=\sqrt{\frac{2n+1}{4\pi}\frac{(n-m)!}{(n+m)!}},\quad  a_{nm}^j=\frac{(-1)^{n-j}(2j)!c_{nm}}{2^nj!(n-j)!(2j-n-m)!}.
\end{equation}
and
\begin{equation}
b_{nm}^s=\sum\limits_{j=q}^{n}\frac{(-1)^{s}a_{nm}^j(j-r)!}{s!(j-r-s)!},\quad q=\max\Big(\Big\lceil\frac{n+m}{2}\Big\rceil, s+r\Big).
\end{equation}
The derivation in \cite{wang2019fast} gives
\begin{equation}
\widehat P_n^m\Big(\frac{\lambda_{\ell z}}{\lambda_{\ell}}\Big)\widehat P_{\nu}^{\mu}\Big(\frac{\lambda_{\ell^{\prime} z}}{\lambda_{\ell^{\prime}}}\Big)=\sum\limits_{s=0}^{n-r+\nu-r'}C_{n\nu m\mu}^s\lambda_{\rho}^{|m|+|\mu|+2s}\Big(\frac{\lambda_{\ell}}{\lambda_{\ell z}}\Big)^{i}\Big(\frac{\lambda_{\ell'}}{\lambda_{\ell' z}}\Big)^{j},
\end{equation}
for all $n, \nu=0, 1, \cdots, $ and $-n\leq m\leq n$, $-\nu\leq \mu\leq \nu$ where $i=(n+|m|)(\bmod\; 2)$, $j=(\nu+|\mu|)(\bmod\; 2)$, $r=\big\lfloor (n+|m|)/2\big\rfloor, \quad r'=\big\lfloor(\nu+|\mu|)/2\big\rfloor,$
and
$$
C_{n\nu m\mu}^{s}=\sum\limits_{t=\max(s-\nu+r', 0)}^{\min(s, n-r)}\frac{\tau_m\tau_{\mu}b_{n|m|}^tb_{\nu|\mu|}^{s-t}(|m|+|\mu|+2s)!}{(\ri\lambda_{\ell})^{|m|+2t}(\ri \lambda_{\ell'})^{|\mu|+2(s-t)}}, \quad \tau_{\nu}=\begin{cases}
1, & \nu\geq 0,\\
(-1)^{-\nu}, & \nu<0.
\end{cases}
$$
Define integrals
\begin{equation}\label{M2Ltable}
\begin{split}
\mathcal S_{nm,ij}^{1\mathfrak b}(\rho, z, z')=\int_0^{\infty}\frac{\lambda_{\rho}^nJ_{m}(\lambda_{\rho}\rho)\mathcal{Z}_{\ell\ell'}^{+}(z, z')}{\sqrt{(n+m)!(n-m)!}}\frac{\sigma_{\ell\ell'}^{1\mathfrak b}(\lambda_{\rho})}{\lambda_{\ell z}}\Big(\frac{\lambda_{\ell}}{\lambda_{\ell z}}\Big)^i\Big(\frac{\lambda_{\ell'}}{\lambda_{\ell' z}}\Big)^j d\lambda_{\rho},\\
\mathcal S_{nm,ij}^{2\mathfrak b}(\rho, z, z')=\int_0^{\infty}\frac{\lambda_{\rho}^nJ_{m}(\lambda_{\rho}\rho)\mathcal{Z}_{\ell\ell'}^{-}(z, z')}{\sqrt{(n+m)!(n-m)!}}\frac{\sigma_{\ell\ell'}^{2\mathfrak b}(\lambda_{\rho})}{\lambda_{\ell z}}\Big(\frac{\lambda_{\ell}}{\lambda_{\ell z}}\Big)^i\Big(\frac{\lambda_{\ell'}}{\lambda_{\ell' z}}\Big)^j d\lambda_{\rho},
\end{split}
\end{equation}
for $i, j=0, 1$. Then
\begin{equation}\label{metoletableform}
I_{nm,\nu\mu}^{\mathfrak{ab}}(\rho, z, z')=\ri^{\mu-m}\sum\limits_{s=0}^{n-r+\nu-r'}\widetilde C_{n\nu m\mu}^s\mathcal S_{|m|+|\mu|+2s+1,\mu-m,ij}^{\mathfrak{ab}}(\rho, z, z'),
\end{equation}
where
$$\widetilde C_{n\nu m\mu}^s=\sqrt{(|m|-m+|\mu|+\mu+2s+1)£¡(|m|+m+|\mu|-\mu+2s+1)£¡}C_{n\nu m\mu}^s,$$
$i=(n+|m|)(\bmod\; 2)$, $j=(\nu+|\mu|)(\bmod\; 2)$.

An important aspect of the implementation of FMM concerns scaling. Since $M_{nm}^{\mathfrak{ab}}\approx(|\bs r-\bs r_c^{\mathfrak{ab}}|)^n$, $L_{nm}^{\mathfrak{ab}}\approx(|\bs r^{\mathfrak{ab}}-\bs r_c^t|)^{-n}$, a naive use of the expansions \eqref{melayerupgoingimage} and \eqref{lelayerimage} in the implementation of FMM is likely to encounter underflow and overflow issues. To avoid this, one must scale expansions, replacing $M_{nm}^{\mathfrak{ab}}$ by $M_{nm}^{\mathfrak{ab}}/S^n$ and $L_{nm}^{\mathfrak{ab}}$ by $L_{nm}^{\mathfrak{ab}}\cdot S^n$ where $S$ is the scaling factor. To compensate for this scaling, we replace $\widetilde{\mathcal F}_{nm}^{\mathfrak{ab}}(\bs r, \bs r_c^{\mathfrak{ab}})$ with $\widetilde{\mathcal F}_{nm}^{\mathfrak{ab}}(\bs r, \bs r_c^{\mathfrak{ab}})\cdot S^n$, $T_{nm,n'm'}^{\mathfrak{ab}}$ with $T_{nm,n'm'}^{\mathfrak{ab}}\cdot S^{n+n'}$.  Usually, the scaling factor $S$ is chosen to be the size of the box in which the computation occurs. Therefore, the following scaled Sommerfeld-type integrals
\begin{equation}
\begin{split}
S^{n}\mathcal S_{nm,ij}^{1\mathfrak{b}}(\rho, z, z')=S^{n}\int_0^{\infty}\frac{\lambda_{\rho}^nJ_{m}(\lambda_{\rho}\rho)\mathcal{Z}_{\ell\ell'}^{+}(z, z')}{\sqrt{(n+m)!(n-m)!}}\frac{\sigma_{\ell\ell'}^{1\mathfrak{b}}(\lambda_{\rho})}{\lambda_{\ell z}}\Big(\frac{\lambda_{\ell}}{\lambda_{\ell z}}\Big)^i\Big(\frac{\lambda_{\ell'}}{\lambda_{\ell' z}}\Big)^j d\lambda_{\rho},\\
S^{n}\mathcal S_{nm,ij}^{2\mathfrak{b}}(\rho, z, z')=S^{n}\int_0^{\infty}\frac{\lambda_{\rho}^nJ_{m}(\lambda_{\rho}\rho)\mathcal{Z}_{\ell\ell'}^{-}(z, z')}{\sqrt{(n+m)!(n-m)!}}\frac{\sigma_{\ell\ell'}^{2\mathfrak{b}}(\lambda_{\rho})}{\lambda_{\ell z}}\Big(\frac{\lambda_{\ell}}{\lambda_{\ell z}}\Big)^i\Big(\frac{\lambda_{\ell'}}{\lambda_{\ell' z}}\Big)^j d\lambda_{\rho},
\end{split}
\end{equation}
for all $n\geq m\geq  0$ are computed in the implementation. Recall the recurrence formula
$$J_{m+1}(z)=\frac{2m}{z}J_{m}(z)-J_{m-1}(z),$$
and define $a_n=\sqrt{n(n+1)}$, we have
\begin{equation*}
\begin{split}
S^{n}\mathcal S_{nm+1,ij}^{\mathfrak{ab}}(\rho, z, z')=&\int_0^{\infty}\frac{(\lambda_{\rho}S)^{n}J_{m+1}(\lambda_{\rho}\rho)\mathcal{Z}_{\ell\ell'}^{\pm}(z, z')}{\sqrt{(n+m+1)!(n-m-1)!}}\frac{\sigma_{\ell\ell'}^{\mathfrak{ab}}(\lambda_{\rho})}{\lambda_{\ell z}}\Big(\frac{\lambda_{\ell}}{\lambda_{\ell z}}\Big)^i\Big(\frac{\lambda_{\ell'}}{\lambda_{\ell' z}}\Big)^j d\lambda_{\rho}\\
=&\frac{2m S}{a_{n+m}\rho  }\int_0^{\infty}\frac{(\lambda_{\rho}S)^{n-1}J_{m}(\lambda_{\rho}\rho)\mathcal{Z}_{\ell\ell'}^{\pm}(z, z')}{\sqrt{(n+m-1)!(n-m-1)!}}\frac{\sigma_{\ell\ell'}^{\mathfrak{ab}}(\lambda_{\rho})}{\lambda_{\ell z}}\Big(\frac{\lambda_{\ell}}{\lambda_{\ell z}}\Big)^i\Big(\frac{\lambda_{\ell'}}{\lambda_{\ell' z}}\Big)^j{\rm d}\lambda_{\rho}\\
-&\frac{a_{n-m}}{a_{n+m}}\int_0^{\infty}\frac{(\lambda_{\rho}S)^{n}J_{m-1}(\lambda_{\rho}\rho)\mathcal{Z}_{\ell\ell'}^{\pm}(z, z')}{\sqrt{(n+m-1)!(n-m+1)!}}\frac{\sigma_{\ell\ell'}^{\mathfrak{ab}}(\lambda_{\rho})}{\lambda_{\ell z}}\Big(\frac{\lambda_{\ell}}{\lambda_{\ell z}}\Big)^i\Big(\frac{\lambda_{\ell'}}{\lambda_{\ell' z}}\Big)^j{\rm d}\lambda_{\rho},
\end{split}
\end{equation*}
which directly gives the following forward recurrence formula
\begin{equation}\label{recurrence1}
S^{n}\mathcal S_{nm+1,ij}^{\mathfrak{ab}}=\frac{2m}{a_{n+m}}\frac{S}{\rho}S^{n-1}\mathcal S_{n-1m,ij}^{\mathfrak{ab}}-\frac{a_{n-m}}{a_{n+m}}S^{n}\mathcal S_{nm-1,ij}^{\mathfrak{ab}},\quad n\geq m\geq 1.
\end{equation}
Obviously, this recurrence formula is stable if
\begin{equation}
\frac{2m}{a_{n+m}}<\frac{\rho}{S}.
\end{equation}
Define $S_{\rho}=\rho/S$, the above inequality can rewritten as
\begin{equation}
S_{\rho}^2n^2+(2m+1)S_{\rho}^2n+(S_{\rho}^2-4)m^2+mS_{\rho}^2>0.
\end{equation}
Solving the inequality for $n$ with fixed $m$, we obtain
\begin{equation}
n>2\sqrt{\frac{m^2}{S_{\rho}^2}+1}-m-\frac{1}{2}.
\end{equation}
Conversely, for
\begin{equation}
n\leq 2\sqrt{\frac{m^2}{S_{\rho}^2}+1}-m-\frac{1}{2},
\end{equation}
the backward recursion
\begin{equation}\label{backwardrecurrence1}
S^{n-1}\mathcal S_{n-1m,ij}^{\mathfrak{ab}}=\frac{a_{n+m}}{2m}\frac{\rho}{S}S^{n}\mathcal S_{nm+1,ij}^{\mathfrak{ab}}+\frac{a_{n-m}}{2m}\frac{\rho}{S}S^n\mathcal S_{nm-1,ij}^{\mathfrak{ab}},
\end{equation}
will be adopted.

Let us first consider the computation of the integrals involved in the M2L translation matrices $T_{nm,n'm'}^{\mathfrak{ab}}$.
For any polarization source box in the interaction list of a given target box, one can find that $\rho_{ts}^{\mathfrak{ab}}$ is either $0$ or larger than the box size $S$.  If $\rho_{ts}^{\mathfrak{ab}}=0$, we directly have
\begin{equation}
S^{n}\mathcal S_{nm,ij}^{\mathfrak{ab}}(0,z, z')=0, \quad \forall n\geq m>0,
\end{equation}
for any $z$ and $z'$ such that the integrals are convergent. In all other cases, we have $\rho_{ts}^{\mathfrak{ab}}\geq S$ and the forward recurrence formula \eqref{recurrence1} can always be applied as we have
$$\frac{2m}{\sqrt{(n+m+1)(n+m)}}<\frac{1}{\sqrt{3}}<\frac{\rho_{ts}^{\mathfrak{ab}}}{S},\quad n\geq m+1,\quad m\geq 1.$$
If the distribution of particles in the problem is not uniform, adaptive tree structure is usually used in the implementation of FMM for better performance. In these adaptive versions, computation of LE coefficients and potential directly using \eqref{lecoeffimage} and ME \eqref{melayerupgoingimage} will be performed if sophisticated conditions are satisfied (cf. \cite{debuhr2016dashmm}). In the computation of $\widetilde{\mathcal F}_{nm}^{\mathfrak{ab}}(\bs r, \bs r_c^{\mathfrak{ab}})\cdot S^n$ and $L_{nm}^{\mathfrak{ab}}\cdot S^n$, $\rho^{\mathfrak{ab}}_s$ and $\rho_t^{\mathfrak{ab}}$ could be arbitrary small. Therefore, the backward recurrence formula \eqref{backwardrecurrence1} is needed. Nevertheless, these direct computations are seldomly used even in the FMM with adaptive tree structure.

Given truncation number $p$, the initial values $ \{\mathcal S_{n0,ij}^{\mathfrak{ab}}(\rho, z, z')\}_{n=0}^{2p+3}$ and $\{ \mathcal S_{n1,ij}^{\mathfrak{ab}}(\rho, z, z')\}_{n=1}^{2p+3}$ for forward recursion or the initial values $ \{\mathcal S_{(2p+3)m,ij}^{\mathfrak{ab}}(\rho, z, z')\}_{m=0}^{2p+3}$ for backward recursion are computed by using composite Gaussian quadrature along the positive real line.
We truncate the unbounded interval $[0, \infty)$ at a point $X_{max}>0$, where the integrand has
decayed to a user specified tolerance (e.g., $1.0e-14$).

\begin{rem}\label{remark1}
	In the computation of a general reaction component $\Phi^{\mathfrak{ab}}_{\ell\ell'}(\bs r_{\ell i}), i=1, 2, \cdots, N_{\ell}$, the targets and equivalent polarization sources will locate at different sides of the material interface $z=d_{\ell-1}$ if $\mathfrak a=1$ or $z=d_{\ell}$ otherwise. Therefore, most target boxes if not all in the leaves of the target tree are far away from all source boxes in the leaves of the source tree. Usually, no direct interactions between sources and targets are calculated once the size of the smallest box is smaller than the minimum distance between sources and the corresponding interface. That means the time consuming computation of integrals $I_{00}^{\mathfrak{ab}}(\rho, z)$ for direct interaction is seldomly performed in the FMM. By the same reason, the interaction list of most target boxes in the target tree are empty. Therefore, the number of M2L translations in the FMM for reaction components are much less than that in the classic FMM for free space problems. To make it clear, we give an illustration in Fig. \ref{treestructure} using a 2-D tree structure for reaction component $\Phi_{\ell\ell'}^{22}(\bs r_{\ell i})$. The number of sources boxes in the interaction list of all target boxes in the fourth layer are counted (see the numbers in the 3rd subplot in Fig. \ref{treestructure}). Above discussions explain the fact that the FMMs for reaction components are much more efficient than FMM for free space components when the number of sources and targets is large enough.
	\begin{figure}[ht!]
		\centering
		\includegraphics[scale=0.7]{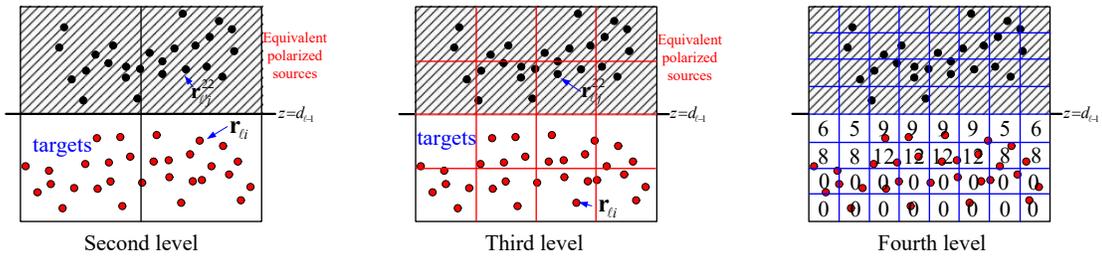}
		\vspace{-15pt}
		\caption{Boxes in the source (shadowed) and target tree for the computation of  $\Phi_{\ell\ell'}^{22}(\bs r_{\ell i})$.}
		\label{treestructure}
	\end{figure}
\end{rem}

\section{Numerical results}

In this section, we present numerical results to demonstrate the performance
of the proposed FMM for linearized Poisson-Boltzmann equation in layered media.
This algorithm is implemented based on an open-source adaptive FMM package DASHMM
\cite{debuhr2016dashmm} on a
workstation with two Xeon E5-2699 v4 2.2 GHz processors (each has 22 cores)
and 500GB RAM using the gcc compiler version 6.3.

We test the problem in three layers media with interfaces placed at $z_{0}=0$, $z_{1}=-1.2$. Particles
are set to be uniformly distributed in irregular domains which are obtained by shifting the domain determined by $r=0.5-a+\frac{a}{8}(35\cos^{4}\theta-30\cos^{2}\theta+3)$
with $a=0.1,0.15,0.05$ to new centers $(0,0,0.6)$, $(0,0,-0.6)$
and $(0,0,-1.8)$, respectively (see
Fig. \ref{fmmperformance} (a) for the cross section of the domains). All particles are generated by keeping the
uniform distributed particles in a larger cube within corresponding irregular
domains. In the layered media, the dielectric constant $\{\varepsilon_{\ell}\}_{\ell=0}^2$ and the inverse Debye-Huckel length $\{\lambda_{\ell}\}_{\ell=0}^2$ are set to be
$$\varepsilon_0=1.0, \quad\varepsilon_1=8.6, \quad \varepsilon_2=20.5,\quad \lambda_0=1.2,\quad \lambda_1=0.5, \quad \lambda_2=2.1.$$
Let $\widetilde{\Phi}_{\ell}%
(\boldsymbol{r}_{\ell i})$ be the approximated values of $\Phi_{\ell
}(\boldsymbol{r}_{\ell i})$ calculated by FMM. Define $\ell^{2}$ and maximum errors as
\begin{equation}
Err_{2}^{\ell}:=\sqrt{\frac{\sum\limits_{i=1}^{N_{\ell}}|\Phi_{\ell
		}(\boldsymbol{r}_{\ell i})-\widetilde{\Phi}_{\ell}(\boldsymbol{r}_{\ell
			i})|^{2}}{\sum\limits_{i=1}^{N_{\ell}}|\Phi_{\ell}(\boldsymbol{r}_{\ell
			i})|^{2}}},\qquad Err_{max}^{\ell}:=\max\limits_{1\leq i\leq{N_{\ell}}}%
\frac{|\Phi_{\ell}(\boldsymbol{r}_{\ell i})-\widetilde{\Phi}_{\ell
	}(\boldsymbol{r}_{\ell i})|}{|\Phi_{\ell}(\boldsymbol{r}_{\ell i})|}.
\end{equation}
For accuracy test, we put
$N=912+640+1296$ particles in the irregular domains in three layers see Fig. \ref{fmmperformance} (a). Convergence rates against $p$ are depicted in Fig. \ref{fmmperformance} (b).
The CPU time for the computation of all three free space components $\{\Phi^{free}_{\ell}(\boldsymbol{r}_{\ell i})\}_{\ell=0}^2$, three selected reaction components
$\{\Phi^{11}_{00},\Phi^{11}_{11},\Phi^{22}_{22}\}$ and all sixteen reaction components $\Phi^{\mathfrak{ab}}_{\ell\ell'}(\boldsymbol{r}_{\ell i})$ with truncation $p=5$ are compared in Fig. \ref{fmmperformance} (c) for up to 3 millions particles. It shows that all of them have an $O(N)$ complexity while the CPU time for the computation of reaction components has a much smaller linear scaling constant {due to the fact that most of the equivalent polarization sources are well-separated with the targets.} CPU time with multiple cores is given in Table \ref{Table:ex1three} and it shows that, due to the small amount of CPU time in computing the reaction components, the speedup of the parallel computing is mainly decided by the computation of the free space components. Here, we only use parallel implementation within the computation of each component. Note the computation of each component is independent of other components. Therefore, it is straightforward to implement a version of the code which computes all components in parallel.
\begin{figure}[ht!]
	\center
	\subfigure[distribution of particles]{\includegraphics[scale=0.33]{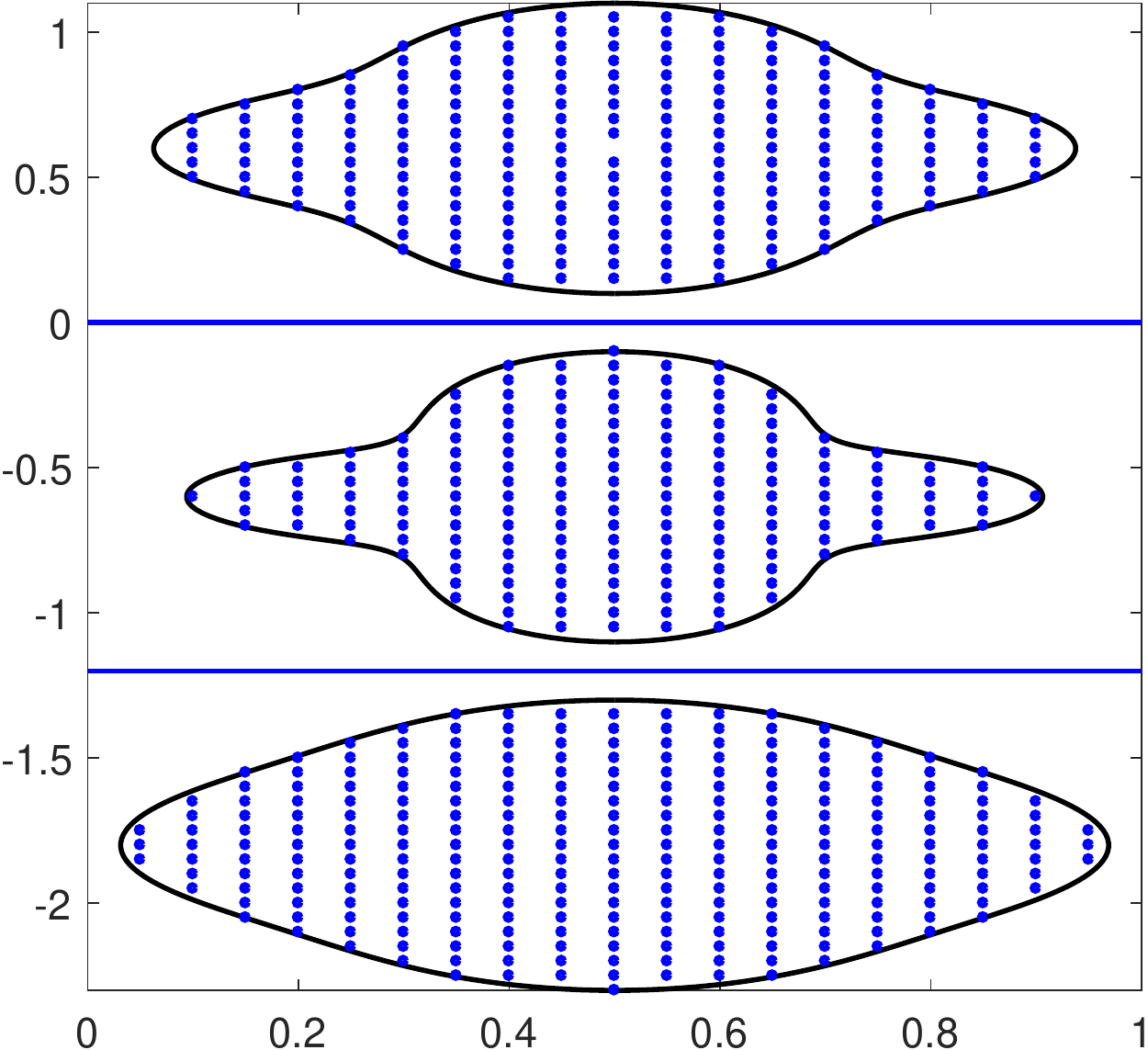}}\quad
	\subfigure[convergence rates vs. $p$]{\includegraphics[scale=0.3]{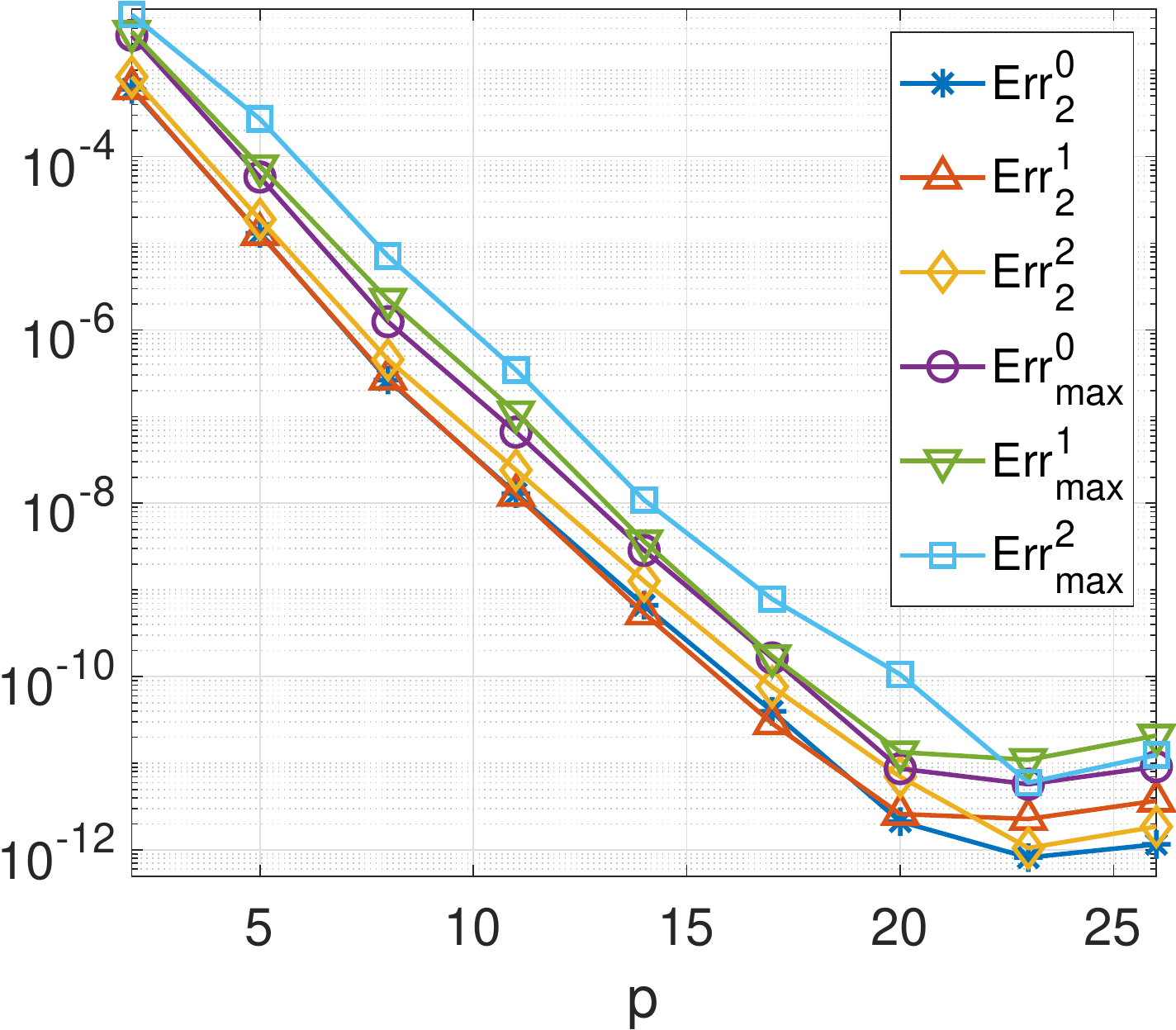}}\quad
	\subfigure[CPU time vs. $N$]{\includegraphics[scale=0.3]{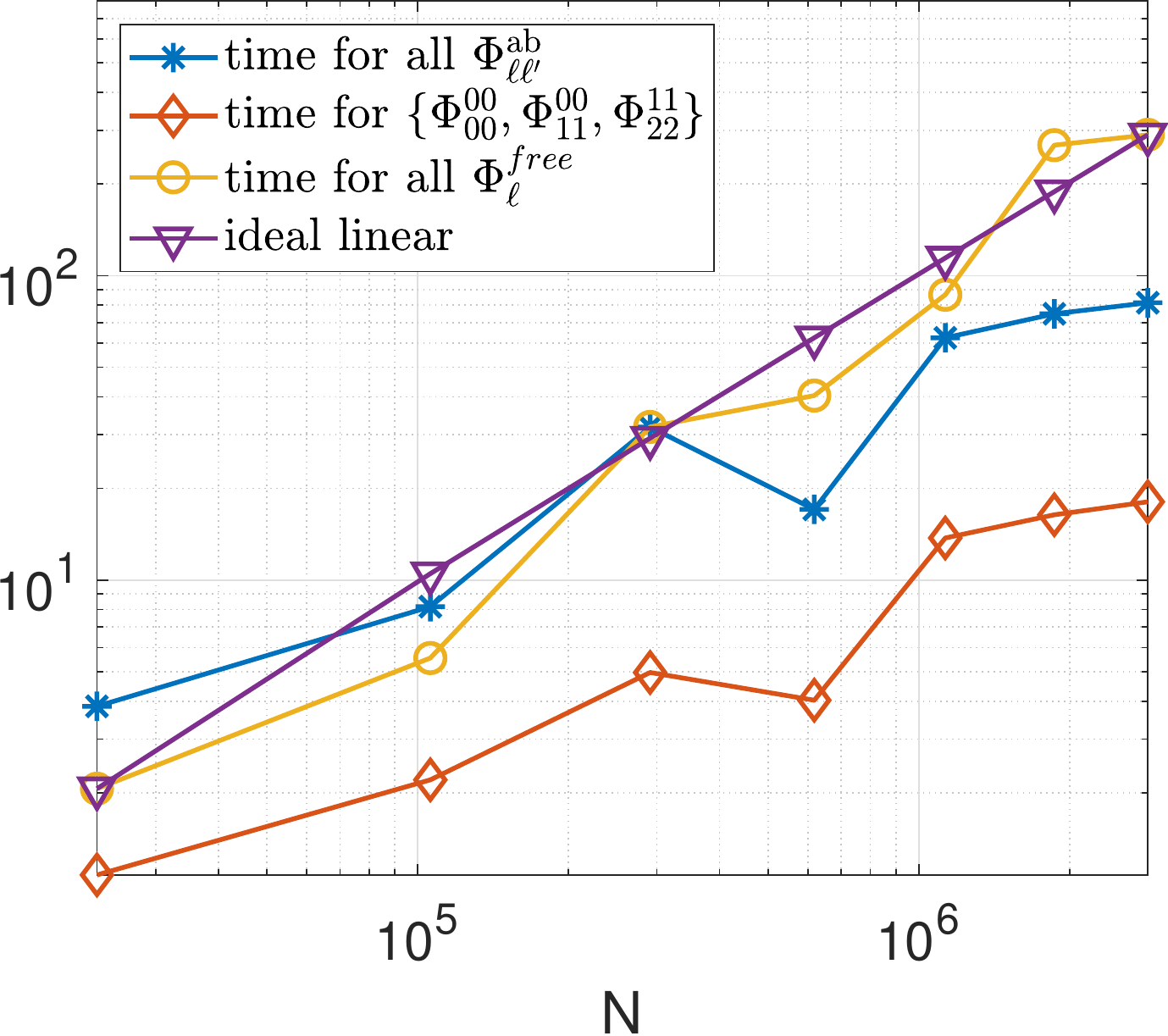}}
	\caption{Performance of FMM for a three layers media problem.}%
	\label{fmmperformance}%
\end{figure}
\begin{table}[ht!]
	\centering {\small
		\begin{tabular}
			[c]{|c|c|c|c|}\hline
			cores & $N$ & time for all $\{\Phi_{\ell}^{free}\}_{\ell=0}^2$ & time for all $\{\Phi^{\mathfrak{ab}}_{\ell\ell'}\}$ 			\\\hline
			\multirow{4}{*}{1} &618256 & 40.36 & 17.06  \\\cline{2-4}
			& 1128556 & 86.72 & 62.47  \\\cline{2-4}
			& 1862568 & 269.05 & 74.93 \\\cline{2-4}
			& 2861288 & 292.42 & 81.47 \\\hline
			\multirow{4}{*}{6} & 618256 & 7.653 & 3.613\\\cline{2-4}
			& 1128556 & 16.29 & 12.50  \\\cline{2-4}
			& 1862568 & 50.72 & 15.52  \\\cline{2-4}
			& 2861288 & 54.85 & 17.27  \\\hline
			\multirow{4}{*}{36} & 618256 & 2.042 & 1.639  \\\cline{2-4}
			& 1128556 & 4.308 &4.459  \\\cline{2-4}
			& 1862568 & 14.94 & 6.104  \\\cline{2-4}
			& 2861288 & 15.21 & 7.673  \\\hline
		\end{tabular}
	}
	\caption{Comparison of CPU time with multiple cores ($p=5$).}%
	\label{Table:ex1three}%
\end{table}

\section{Conclusion}

In this paper, we have presented a fast multipole
method associated with the Green's function of linearized Poisson-Boltzmann equation in 3-D layered media. The potential of interest has been decomposed into a free space and four types of reaction field components. By another extension of the Funk-Hecke formula involving pure imaginary wave number, we developed ME of $O(p^{2})$ terms for the far field of the reaction components,  which can be associated with polarization sources at specific locations for each type of the reaction field components. M2L translation operators are also developed for the reaction components. As a result, the traditional FMM framework can be applied to both the free space and reaction components once the polarization sources are used together with the original targets. Due to the separation of the polarization coordinates and the corresponding target positions by a material interface, the computational cost from the reaction component is only a fraction of that of the FMM for the free space component. Hence, computing the potential in layered media basically costs almost the same as that for the wave interaction in the free space.

For the future work, we will carry out error estimate of the FMM for the linearized  Poisson-Boltzmann equation in 3-D
layered media, which require an error analysis for the new MEs and M2L operators for the reaction components. The combination of the FMM with integral method for efficient simulation of ion channel will also be our next research work.

\section*{Acknowledgement}

The first author acknowledges the financial support provided by NSFC (grant 11771137) and the Construct Program of the Key Discipline in Hunan Province.
This work was supported by US Army Research Office (Grant No.
W911NF-17-1-0368).


\end{document}